\documentclass[11pt,a4paper]{article}

\usepackage[T1]{fontenc}  
\usepackage[left=20mm, top=20mm, bottom=20mm, right=20mm]{geometry} 
\usepackage{amsmath, amsthm, amssymb, bm, mathtools}  
\usepackage{mathrsfs} 
\usepackage{cases}    

\usepackage{graphicx, xcolor}  
\usepackage{array}  
\usepackage[inline]{enumitem}  
\setlist[enumerate,1]{label=(\arabic*)}  
\usepackage{caption, subcaption}  
\usepackage[normalem]{ulem} 
\usepackage[pdftex,
    bookmarks=true,           
    bookmarksnumbered=true,   
    bookmarksopen=true,       
    plainpages=false,         
    pdfpagelabels,            
    colorlinks=true,          
    linkcolor=blue,           
    citecolor=blue,           
    anchorcolor=green,        
    urlcolor=black,           
    breaklinks=true,          
    hyperindex=true]{hyperref}

\usepackage{cleveref}

\makeatletter
\renewenvironment{proof}[1][\proofname]{%
    \par\pushQED{\qed}%
    \normalfont\topsep6\p@\@plus6\p@\relax
    \trivlist
    \item[\hskip\labelsep
          \bfseries
          \textcolor{black}{#1\@addpunct{.}}]
    \ignorespaces
}{%
    \popQED\endtrivlist\@endpefalse
}
\makeatother

\newtheorem{theorem}{Theorem}
\newtheorem{lem}{Lemma}
\newtheorem{cor}{Corollary}
\newtheorem{conjecture}{Conjecture}
\theoremstyle{definition}
\newtheorem{definition}{Definition}
\newtheorem{proposition}{Proposition}

\newtheorem{obs}{Observation}

\usepackage{tikz} 
\usetikzlibrary{calc, shapes, decorations.markings, arrows, patterns, snakes}

\pgfdeclarelayer{nodelayer}   
\pgfdeclarelayer{edgelayer}   
\pgfsetlayers{main, nodelayer, edgelayer} 

\tikzset{
    blue dot/.style={ 
        circle,
        fill=blue,
        minimum size=5pt,
        inner sep=0pt,
        outer sep=0pt
    },
    arc 20/.style={ 
        line width=0.6pt,
        black
    },
    label text/.style={ 
        font=\small,
        inner sep=1pt
    }
}

\usepackage[square, numbers, sort&compress]{natbib}  

\usepackage{soul}  

\definecolor{LightBlue}{RGB}{173,216,230}

\usepackage[colorinlistoftodos,prependcaption,textsize=scriptsize, textwidth=20mm,]{todonotes}

\begin{document}
	\title{Tighter Bounds on the Degree-Truncated Choice Number of Planar Graphs}
	\author{Huijuan Xu\thanks{School of Mathematical Sciences, Zhejiang Normal University, Email: xuhuijuan@zjnu.edu.cn.} \and Huan Zhou\thanks{School of Mathematical Sciences, Zhejiang Normal University, Email: huanzhou@zjnu.edu.cn.} \and Jialu Zhu \thanks{Data Science Institute, Shandong University, Jinan, China. Email: jialuzhu@zjnu.edu.cn.} \and        Xuding Zhu\thanks{School of Mathematical Sciences, Zhejiang Normal University, Email: xdzhu@zjnu.edu.cn, Grant Numbers: NSFC 12371359.} }
\date{
    \today
}
 
	\maketitle
	
	\begin{abstract}
	Assume $G$ is a graph and $k$ is a positive integer. Let $f:V(G)\to \mathbb{N}$ be defined as $f(v)=\min\{k,d_G(v)\}$. If $G$ is $f$-choosable, then we say $G$ is degree-truncated $k$-choosable. The degree-truncated choice number of $G$ is $\operatorname{ch}^{\text{\st{d}}}(G) = \min\{k: G \text{ is degree-truncated $k$-choosable}\}$. For a family $\mathcal{G}$ of graphs, $\operatorname{ch}^{\text{\st{d}}}(\mathcal{G}) = \max\{\operatorname{ch}^{\text{\st{d}}}(G):G \in \mathcal{G}\}$. Let $\mathcal{P}$ denote the family of 3-connected non-complete planar  graphs. Richter asked in 2008 whether $ch^{\text{\st{d}}}(\mathcal{P}) \le 6$.  In 2025, Zhou, Zhu and Zhu answered this question in negative and proved that $8 \le ch^{\text{\st{d}}}(\mathcal{P}) \le 16$.  This result was improved by 
    Jiang, Xu, Xu, and Zhu, who proved that 
    $9 \le ch^{\text{\st{d}}}(\mathcal{P}) \le 12$. In this paper, 
     we further improve the result and  prove that 
     $10 \le \operatorname{ch}^{\text{\st{d}}}(\mathcal{P}) \le 11$. We conjecture that $\operatorname{ch}^{\text{\st{d}}}(\mathcal{P}) =10$, and we confirm this conjecture for those planar graphs $G \in \mathcal{P}$ for which the subgraph induced by vertices of degree at least 11 is 4-choosable.
 
 \textbf{Keywords}\ {List colouring, \ choice number, \ degree-choosable,\ degree-truncated $k$-choosable,\ planar graphs}
\end{abstract}
\section{Introduction}

A \textit{list assignment} $L$ for a graph $G$ is a mapping that assigns a set $L(v)$ of permissible colours to each vertex $v$ of $G$.
We denote by $\mathbb{N}^G$   the set of mappings from $V(G)$ to $\mathbb{N}={\left\{0,1, \ldots \right\} }  $. 
For $f \in \mathbb{N}^G$, an $f$-assignment is a list assignment $L$ with $|L(v)| \geq f(v)$ for each vertex $v$.
Given a list assignment $L$ of $G$, an $L$-colouring of $G$ is a mapping $\phi$ that assigns a colour $\phi(v)$ from $L(v)$ to each vertex $v$, such that $\phi(u) \ne \phi(v)$ for every edge $uv$ of $G$. We say $G$ is \emph{$f$-choosable} if $G$ is $L$-colourable for any $f$-assignment $L$ of $G$.   The \emph{choice number} $\operatorname{ch}(G)$ of $G$ is the minimum integer $k$ such that $G$ is $k$-choosable. 

List colouring of graphs was introduced by Erd\H{o}s, Rubin and Taylor \cite{ERT} and Vizing \cite{Vizing} in 1970's, and  is now an important part of chromatic graph theory. One of the main topics in this area is list colouring of planar graphs. It was conjectured in \cite{ERT} that every planar graph is 5-choosable, and not every planar graph is 4-choosable. These conjectures were confirmed by Thomassen \cite{Thomassen} and Voigt \cite{Voigt} in 1994 and 1993, respectively. 

To prove that a graph $G$ is $k$-choosable, we may assume $G$ has minimum degree at least $k$, because if  $v \in V(G)$ is a vertex of degree less than $k$, then $G$ is $k$-choosable if and only if $G-v$ is $k$-choosable. Instead of a $k$-assignment, one may consider list assignments for which vertices of small degree have less available colours.  Indeed, list colouring of graphs is often applied in inductive proofs in the study of classical colouring. To prove a graph $G$ is $k$-colourable, we may first colour some of the vertices. The colouring of the remaining vertices  is then a list colouring problem: The set of permissible colours to an uncoloured vertex $v$ consists of those colours from $[k]$ not used by any of its coloured neighbours. 
If a vertex $v$ has small degree in the subgraph induced by uncoloured vertices, then  it likely has more coloured neighbours in the original graph, and hence may have less permissible colours.

Richter considered the following problem (cf. \cite{Hutchinson}): Assume $G$ is a planar graph and $L$ is a list assignment that assigns to every vertex $v$ a set of $k$ colours, except that vertices $v$ of degree less than $k$ have $|L(v)|=d_G(v)$,  where $d_G(v)$ is the degree of $v$.
Will $G$ be $L$-colourable? 

For the question to be meaningful, some graphs need to be excluded. 
A graph  $G$ is called  {\em  degree-choosable} if it is $f$-choosable for the mapping $f \in \mathbb{N}^G$ defined as $f(v)=d_G(v)$ for $v \in V(G)$. 
Degree-choosable graphs are characterized in \cite{ERT} and \cite{Vizing}: A connected graph $G$ is not degree-choosable if and only if $G$ is a Gallai-tree, i.e.,  each block of $G$ is either a complete graph or an odd cycle. If $G$ is not degree-choosable, then for any positive integer $k$,  the question above has a negative answer. So Gallai-trees need to be excluded. For the graph $K_{2, k^2}$,   which is planar and not a Gallai-tree, the answer is also easily seen to be negative. To exclude these obvious counterexamples, Richter asked the following question:

If $G$ is a $3$-connected non-complete planar graph, is it true that $G$ is $f$-choosable, where $f(v)=\min\{6, d_G(v)\}$?

The question are studied in a few papers. The following definition was given in \cite{ZZZ25}.

\begin{definition}
    \label{def-dtcn}
    Let $G$ be a graph and $k$ be a positive integer. We say $G$ is {\em degree-truncated $k$-choosable} if $G$ is $f$-choosable for the function $f \in \mathbb{N}^G$ defined as $f(v)=\min\{k, d_G(v)\}$ for each vertex $v$. The {\em degree-truncated choice number} $\operatorname{ch}^{\text{\st{d}}}(G)$ of $G$ is the minimum integer $k$ such that $G$ is degree-truncated $k$-choosable. For a family $\mathcal{G}$ of graphs, let 
    $\operatorname{ch}^{\text{\st{d}}}(\mathcal{G})= \max \{\operatorname{ch}^{\text{\st{d}}}(G): G \in \mathcal{G}\}.$ 
\end{definition}
We denote by $\mathcal{P}$ the family of 3-connected non-complete planar graphs. 
Richter's question is whether $\operatorname{ch}^{\text{\st{d}}}(\mathcal{P}) \le 6$?
 
Motivated by Richter's question, 
 Hutchinson \cite{Hutchinson} studied the degree-truncated choice number of outerplanar graphs. She proved that if $G \ne K_3$ is a 2-connected maximal outerplanar graph (i.e., each inner face is a triangle), then $G$ is degree-truncated 5-choosable. If $G$ is a 2-connected  bipartite outerplanar graph, then $G$ is degree-truncated 4-choosable. The results are sharp. Kostochka constructed a 2-connected maximal outerplanar graph that is not degree-truncated 4-choosable, and Hutchinson constructed a 2-connected bipartite outerplanar graph  that is not degree-truncated 3-choosable. 

 In \cite{CPTV},   some families of $K_5$-minor free graphs are shown to be degree-truncated $k$-choosable for $k= 6,7,8$, by putting restrictions on the distance between components of the subgraph induced by vertices of degree less than $k$. 

 Hutchinson asked whether every 2-connected outerplanar graph other than odd cycles are also degree-truncated $5$-choosable. This question was answered in affirmative and the conclusion are strengthened and generalized in \cite{LWZZ} and \cite{DZ}.  DP-colouring of a graph is a variation of list colouring of a graph introduced in \cite{DP}. We  refer the reader to \cite{DP} for its definition. For our purpose, we just remark that if $G$ is DP-$f$-colourable, then $G$ is $f$-choosable. We say $G$ is {\em degree-truncated DP-$k$-colourable}, if $G$ is DP-$f$-colourable for $f$ defined as $f(v)=\min\{k, d_G(v)\}$.  The {\em degree-truncated DP-chromatic number} 
 $\chi_{DP}^{\text{\st{d}}}(G)$ of $G$ is the minimum $k$ such that $G$ is degree-truncated DP-$k$-colourable. Thus $\chi_{DP}^{\text{\st{d}}}(G) \ge \operatorname{ch}^{\text{\st{d}}}(G)$ for any graph $G$, and it is known that the difference $\chi_{DP}^{\text{\st{d}}}(G) - \operatorname{ch}^{\text{\st{d}}}(G)$ can be arbitrarily large. 
 It was proved in \cite{LWZZ} that 2-connected $K_{2,4}$-minor free graphs other than cycles and complete graphs are degree-truncated DP-$5$-colourable. Note that a graph $G$ is outerplanar if and only if $G$ is $K_{2,3}$-minor free  and $K_4$-minor free. So the family of $K_{2,4}$-minor free graphs is a larger family of graphs, and the conclusion is stronger than degree-truncated 5-choosable. 

 An   orientation $D$ of a graph $G$ is 
 called an {\em Alon-Tarsi orientation} (AT-orientation, for short) if $|\mathcal{E}_e(D)|=|\mathcal{E}_o(D)|$, where   $\mathcal{E}_e(D)$ (respectively, $\mathcal{E}_o(D)$) is the family of spanning Eulerian subdigraphs $H$ of $D$ ($H$ is Eulerian if $d_H^+(v) = d_H^-(v)$ for each vertex $v$)  with an even number of arcs (respectively, an odd number of arcs). We say $G$ is $f$-AT if $G$ has an AT-orientation $D$ with $d_D^+(v) < f(v)$ for each vertex $v$. It was proved in \cite{AT} that if $G$ is $f$-AT, then $G$ is $f$-choosable. 
 We say $G$ is {\em degree-truncated $k$-AT} if $G$ is $f$-AT for the function defined as $f(v)=\min\{k, d_G(v)\}$ for each vertex $v$. The {\em degree-truncated AT number} 
 ${\rm AT}^{\text{\st{d}}}(G)$ of $G$ is the minimum $k$ such that $G$ is degree-truncated $k$-AT. Thus ${\rm AT}^{\text{\st{d}}}(G) \ge \operatorname{ch}^{\text{\st{d}}}(G)$ for any graph $G$, and it is also known that the difference ${\rm AT}^{\text{\st{d}}}(G) - \operatorname{ch}^{\text{\st{d}}}(G)$ can be arbitrarily large. 
 It was   proved in \cite{DZ} that 2-connected outerplanar graphs other than odd cycles are degree-truncated 5-AT. 
 
Richter's question was answered in negative in
 \cite{ZZZ25}. Along with Richter's question,  it also remained as an open problem for more than a decade whether $\operatorname{ch}^{\text{\st{d}}}(\mathcal{P})$ is bounded by any constant. This question was also settled in \cite{ZZZ25}, where it was 
  proved that  $8 \le \operatorname{ch}^{\text{\st{d}}}(\mathcal{P}) \le 16$. 

This result motivates a challenging open problem: What is the exact value of $\operatorname{ch}^{\text{\st{d}}}(\mathcal{P})$?

The lower and upper bounds for $\operatorname{ch}^{\text{\st{d}}}(\mathcal{P})$ was improved in \cite{JXXZ}, where it was prove
that $9 \le \operatorname{ch}^{\text{\st{d}}}(\mathcal{P}) \le 12$.
In this paper, we further improve this result and prove that $10 \le \operatorname{ch}^{\text{\st{d}}}(\mathcal{P}) \le 11$. We propose the following conjecture.

\begin{conjecture}
    \label{conj-1}
Every 3-connected non-complete planar graph is degree-truncated $10$-choosable. Consequently,     $\operatorname{ch}^{\text{\st{d}}}(\mathcal{P})=10$.
\end{conjecture}

Our result shows that  Conjecture \ref{conj-1} holds for those planar graphs $G$ for which $G[X]$ is 4-choosable, where $X=\{v:d_G(v) \ge 11\}$.

\section{The lower bound}

In this section, we present a 3-connected non-complete planar graph that is not degree-truncated 9-choosable.
Consequently, $\operatorname{ch}^{\text{\st{d}}}(\mathcal{P})
\ge 10$.

The graph is constructed in a few steps. Each step construct a graph with a list assignment, and these  graphs and lists are combined together to obtain the final graph and list. We use two operations to combine graphs and lists together. 

\begin{obs}
\label{obs-1}
The following is easy and the proof is omitted.
\begin{itemize}
    \item For $i=1,2$, let $G_i$ be a graph, $v_i$ be a vertex of $G_i$, and $L_i$ be an assignment of $G_i$ such that $G_i$ is not $L_i$-colourable.
    Let $G$ be obtained from $G_1$ and $G_2$ by taking their disjoint union, and identify $v_1$ and $v_2$ into a single vertex $v^*$. Then $L=L_1 \cup L_2$ (i.e., $L(v) = L_i(v)$ if $v \in V(G_i)-\{v_i\}$ and $L(v^*) =L_1(v_1) \cup L_2(v_2)$) is an assignment of $G$ and $G$ is not $L$-colourable.
    \item Assume $G$ is a graph, $L$ is a list assignment of $G$ such that $G$ is not $L$-colourable. Let $G'$ be obtained from $G$ by adding one vertex $v$ adjacent to some vertices of $G$, and $L'$ be obtained from $L$ by letting $L'(v)=\{a\}$ and $L'(x) = L(x) \cup \{a\}$ for each neighbour $x$ of $v$, and $L'(x)=L(x)$ otherwise. Then $G'$ is not $L'$-colourable.
\end{itemize}
 \end{obs}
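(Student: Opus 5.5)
Both parts go by contradiction: assume the new graph has a proper colouring from the new lists and restrict it to obtain a forbidden colouring of one of the original graphs. The only care needed is to check that the colours added to the lists can never be used.

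For the first item, suppose $\phi$ is an $L$-colouring of $G$. Then $\phi(v^*) \in L_1(v_1)\cup L_2(v_2)$, so $\phi(v^*)\in L_i(v_i)$ for some $i\in\{1,2\}$. Restrict $\phi$ to $G_i$, reading $v^*$ as $v_i$. For every $v \in V(G_i)-\{v_i\}$ we have $L(v)=L_i(v)$, so this restriction assigns each vertex of $G_i$ a colour from its $L_i$-list. Every edge of $G_i$ is still an edge of $G$ once $v_i$ is identified with $v^*$, so the restriction is proper. It is therefore an $L_i$-colouring of $G_i$, contradicting the hypothesis. Note that identifying $v_1$ and $v_2$ creates no edges between $G_1-v_1$ and $G_2-v_2$; this is what makes the restriction legitimate.

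For the second item, suppose $\psi$ is an $L'$-colouring of $G'$. Then $\psi(v)=a$, since $L'(v)=\{a\}$. Every neighbour $x$ of $v$ lies in $V(G)$ and satisfies $\psi(x)\neq a$, so $\psi(x)\in L'(x)-\{a\}\subseteq L(x)$. Every non-neighbour $x\in V(G)$ has $\psi(x)\in L'(x)=L(x)$. Hence $\psi|_{V(G)}$ is a proper $L$-colouring of $G$, a contradiction.

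Neither part has a real obstacle; the only subtle point is in the second item. The added colour $a$ may already belong to $L(x)$ for some neighbour $x$, and the argument is unaffected, since $\psi(x)\neq a$ still gives $\psi(x)\in L(x)$. Neither item needs any property of the lists beyond what is stated.
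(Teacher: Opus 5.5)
Your proof is correct: in both items you restrict a hypothetical colouring of the new graph to one of the original graphs and check that every restricted colour lies in the original list. The paper omits the proof as easy, and this restriction argument is exactly the routine verification it leaves to the reader; nothing is missing, and your remark that a neighbour $x$ with $a\in L(x)$ causes no trouble is also right.
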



\begin{figure}[h]
    \centering
    \begin{tikzpicture}[
    every node/.style={font=\small},
    base/.style={circle,draw,minimum size=6mm,inner sep=0pt},
    minor/.style={circle,draw,minimum size=2mm,inner sep=0pt,fill=black},
    matline/.style={thick},
    crossedline/.style={thick,dashed,red}
  ]

\node[base] (y1) at (10,4)   {$x_2$};
\node[base] (x1) at (0,4)   {$x_1$};

\draw[thick] (x1) -- (y1);


\node[minor] (z1) at (2.2,5.4) {};
\node[minor] (z2) at (3.5,5.5) {};
\node[minor] (z3) at (2,6) {};

\node[base] (z) at (5,6)   {$z$};

\draw[thick] (x1) -- (z1);
\draw[thick] (x1) -- (z2);
\draw[thick] (x1) -- (z3);
\draw[thick] (z1) -- (z2) -- (z3) -- (z1);
\draw[thick] (z2) -- (z);

\node[minor] (w1) at (10-2.2,5.4) {};
\node[minor] (w2) at (10-3.5,5.5) {};
\node[minor] (w3) at (10-2,6) {};

\draw[thick] (y1) -- (w1);
\draw[thick] (y1) -- (w2);
\draw[thick] (y1) -- (w3);
\draw[thick] (w1) -- (w2) -- (w3) -- (w1);
\draw[thick] (w2) -- (z);


\node[minor] (z1b) at (2.2,8-5.4) {};
\node[minor] (z2b) at (3.5,8-5.5) {};
\node[minor] (z3b) at (2,8-6) {};

\node[base] (zb) at (5,8-6)   {$z'$};

\draw[thick] (x1) -- (z1b);
\draw[thick] (x1) -- (z2b);
\draw[thick] (x1) -- (z3b);
\draw[thick] (z1b) -- (z2b) -- (z3b) -- (z1b);
\draw[thick] (z2b) -- (zb);

\node[minor] (w1b) at (10-2.2,8-5.4) {};
\node[minor] (w2b) at (10-3.5,8-5.5) {};
\node[minor] (w3b) at (10-2,8-6) {};

\draw[thick] (y1) -- (w1b);
\draw[thick] (y1) -- (w2b);
\draw[thick] (y1) -- (w3b);
\draw[thick] (w1b) -- (w2b) -- (w3b) -- (w1b);
\draw[thick] (w2b) -- (zb);


\node  at (10,3.5)   {$1234$};
\node  at (0,3.5)   {$1234$};

\node  at (2.5,5.1) {234};
\node  at (4,5.2) {1234};
\node  at (2.5,6.3) {234};

\node  at (5,5.5) {12};

\node  at (10-2.2,5.1) {134};
\node  at (10-3.5,5.2) {1234};
\node  at (10-2,6.3) {134};


\node  at (2.5,8-5.1) {134};
\node  at (4,8-5.2) {1234};
\node  at (2.5,8-6.3) {134};

\node  at (5,8-5.5) {12};

\node  at (10-2.2,8-5.1) {234};
\node  at (10-3.5,8-5.2) {1234};
\node  at (10-2,8-6.3) {234};


\end{tikzpicture}

 \caption{The graph $H_0$ and list assignment $L_0$}
    \label{fig-H0}
\end{figure}

Let $H_0$ and $L_0$ be the graph and list assignment $L_0$ given in Figure \ref{fig-H0}. First we show that $H_0$ is not $L_0$-colourable.

Assume to the contrary that $\phi$ is an $L_0$-colouring of $H_0$. If $z$ is coloured by $2$, then the top right triangle will use all the colours $\{1,3,4\}$. Hence $\phi(x_2)=2$. Thus the copy of $K_4$ containing $x_2$ uses all colours $\{1,2,3,4\}$. This  forces the vertex of this $K_4$ adjacent to $z'$  be coloured by $1$, and hence 
$z'$ is coloured by $2$. Hence the bottom left triangle uses all the colours $\{1,3,4\}$ and hence $\phi(x_1)=2$, which is a contradiction (as $\phi(x_2)=2$). 

If $z$ is coloured by $1$, then the top left triangle will use all the colours $\{2,3,4\}$. Hence $\phi(x_1)=1$. Thus the copy of $K_4$ containing $x_1$ uses all colours $\{1,2,3,4\}$. This  forces the vertex of this $K_4$ adjacent to $z'$  be coloured by $2$, and hence 
$z'$ is coloured by $1$. Hence the bottom right triangle uses all the colours $\{2,3,4\}$ and hence $\phi(x_2)=1$, which is a contradiction (as $\phi(x_1)=1$).

  \begin{figure}[!htb]
    \centering
    \begin{tikzpicture}[every node/.style={font=\small},
    base/.style={circle,draw,minimum size=6mm,inner sep=0pt},
    minor/.style={circle,draw,minimum size=2mm,inner sep=0pt,fill=black},
    matline/.style={thick},
    crossedline/.style={thick,dashed,red}
  ]

\node[base] (y1) at (10,4)   {$y_1$};
\node[base] (x1) at (0,4)   {$y_2$};

\node  at (10,3.5)   {$567$};
\node  at (0,3.5)   {$123567$};

\draw[thick] (x1) -- (y1);


\node[minor] (z1) at (2.2,5.4) {};
\node[minor] (z2) at (3.5,5.5) {};
\node[minor] (z3) at (2,6) {};

\node  at (2.5,5.1) {123};
\node  at (4,5.2) {c123};
\node  at (2.5,6.3) {123};

\draw[thick] (x1) -- (z1);
\draw[thick] (x1) -- (z2);
\draw[thick] (x1) -- (z3);
\draw[thick] (z1) -- (z2) -- (z3) -- (z1);

\node[minor] (w1) at (10-2.2,5.4) {};
\node[minor] (w2) at (10-3.5,5.5) {};
\node[minor] (w3) at (10-2,6) {};

\node  at (10-2.2,5.1) {567};
\node  at (10-3.5,5.2) {c567};
\node  at (10-2,6.3) {567};

\draw[thick] (y1) -- (w1);
\draw[thick] (y1) -- (w2);
\draw[thick] (y1) -- (w3);
\draw[thick] (w1) -- (w2) -- (w3) -- (w1);
\draw[thick] (w2) -- (z2);


\node[minor] (z1b) at (2.2,8-5.4) {};
\node[minor] (z2b) at (3.5,8-5.5) {};
\node[minor] (z3b) at (2,8-6) {};


\draw[thick] (x1) -- (z1b);
\draw[thick] (x1) -- (z2b);
\draw[thick] (x1) -- (z3b);
\draw[thick] (z1b) -- (z2b) -- (z3b) -- (z1b);

\node[minor] (w1b) at (10-2.2,8-5.4) {};
\node[minor] (w2b) at (10-3.5,8-5.5) {};
\node[minor] (w3b) at (10-2,8-6) {};

\draw[thick] (y1) -- (w1b);
\draw[thick] (y1) -- (w2b);
\draw[thick] (y1) -- (w3b);
\draw[thick] (w1b) -- (w2b) -- (w3b) -- (w1b);
\draw[thick] (w2b) -- (z2b);


\node  at (10-2.2,8-5.1) {567};
\node  at (10-3.5,8-5.2) {c567};
\node  at (10-2,8-6.3) {567};


\node  at (2.5,8-5.1) {567};
\node  at (4,8-5.2) {c567};
\node  at (2.5,8-6.3) {567};

\end{tikzpicture}
\caption{The graph $H_1$ and list assignment $L_1$}
    \label{fig-H1}
\end{figure}

\begin{figure}[!htb]
\centering
    \begin{tikzpicture}[scale=0.8, every node/.style={font=\small},
    base/.style={circle,draw,minimum size=6mm,inner sep=0pt},
    minor/.style={circle,draw,minimum size=2mm,inner sep=0pt,fill=black},
    matline/.style={thick},
    crossedline/.style={thick,dashed,red}
  ]

\node[base] (x2) at (7,4)   {$z_1$};
\node  at (7,3.5) {4};

\node[base] (x3) at (0,4)   {$z_2$};
\node  at (-1,3.5)   {$1234567$};

\draw[thick] (x2) -- (x3);


\node[minor] (z1) at (1+2.2,5.4) {};
\node[minor] (z2) at (1+3.5,5.5) {};
\node[minor] (z3) at (1+2,6) {};

\node  at (2.5,5.1) {123};
\node  at (4,5.2) {1234};
\node  at (2.5,6.3) {123};

\draw[thick] (x3) -- (z1);
\draw[thick] (x3) -- (z2);
\draw[thick] (x3) -- (z3);
\draw[thick] (z1) -- (z2) -- (z3) -- (z1);
\draw[thick] (z2) -- (x2);


\node[minor] (z1b) at (1+2.2,8-5.4) {};
\node[minor] (z2b) at (1+3.5,8-5.5) {};
\node[minor] (z3b) at (1+2,8-6) {};


\draw[thick] (x3) -- (z1b);
\draw[thick] (x3) -- (z2b);
\draw[thick] (x3) -- (z3b);
\draw[thick] (z1b) -- (z2b) -- (z3b) -- (z1b);
\draw[thick] (z2b) -- (x2);


\node  at (2.5,8-5.1) {567};
\node  at (4,8-5.2) {4567};
\node  at (2.5,8-6.3) {567};

\end{tikzpicture}
\caption { $H_2$  and list assignment $L_2$}
\label{fig-H2}
\end{figure}

Let graph $H_1$ and list assignment $L_1$, graph $H_2$ and list assignment $L_2$ be given above. It is straightforward to verify that $H_i$ is not $L_i$-colourable. The verification is similar to the case for $H_0$ and $L_0$ and is omitted. 

Now we take the disjoint union of a copy of $H_0$, two copies of $H_1$ and two copies of $H_2$, combine them as indicated in Figure \ref{fig-HH} below. (To be precise, let the two named vertices of the two copies of $H_1$ be $y_1^1, y_2^1$ and $y_1^2,y_2^2$, respectively,  the two named vertices of the two copies of $H_2$ be $z_1^1, z_2^1$ and $z_1^2,z_2^2$, respectively,
the two named vertices of $H_0$ by $x_1^0, x_2^0$, respectively. Then in Figure \ref{fig-HH}, $x_1$ is the identification of 
$y_1^1$ and $ x_1^0$,  $y_1$ is the identification of $y_1^2$ and $x_2^0$, $x_2$ is the identification of 
$y_2^1$ and $z_1^1$,  $y_2$ is the identification of 
$y_2^2$ and $z_1^2$, $x_3=z_2^1, y_3=z_2^2$.) 
Let $L'$ be the union of the corresponding list assignments.

We obtain a plane graph $H'$ which is not $L'$-colourable. It is easy to see from the construction that $L'(x_i)=L'(y_i)=\{1,2,3,4,5,6,7\}$, and for every other vertex $v$ of $H'$, $|L'(v)| = d_{H'}(v)$.

\begin{figure}[!htb]
\centering 
    \begin{tikzpicture}[scale=0.6, every node/.style={font=\small},
    base/.style={circle,draw,minimum size=6mm,inner sep=0pt},
    minor/.style={circle,draw,minimum size=2mm,inner sep=0pt,fill=black},
    matline/.style={thick},
    crossedline/.style={thick,dashed,red}
  ]

\node[base] (xx1) at (5.3+0,4-6)   {$x_1$};

\node[base] (xx2) at (-3+5.3+0,4-6)   {$x_2$};

\node[base] (xx3) at (-6+5.3+0,4-6)   {$x_3$};

\node[base] (yy1) at (5.3+3,4-6)   {$y_1$};

\node[base] (yy2) at (3+5.3+3,4-6)   {$y_2$};

\node[base] (yy3) at (6+5.3+3,4-6)   {$y_3$};

\draw[thick]  (xx1) ..controls (6.8, -1)..  (yy1);
\draw[thick]  (xx1) ..controls (6.8, -3)..  (yy1);

\draw[thick]  (xx1) ..controls (6.8-3, -1)..  (xx2);
\draw[thick]  (xx1) ..controls (6.8-3, -3)..  (xx2);

\draw[thick]  (xx2) ..controls (6.8-6, -1)..  (xx3);
\draw[thick]  (xx2) ..controls (6.8-6, -3)..  (xx3);

\draw[thick]  (yy1) ..controls (6.8+3, -1)..  (yy2);
\draw[thick]  (yy1) ..controls (6.8+3, -3)..  (yy2);

\draw[thick]  (yy2) ..controls (6.8+6, -1)..  (yy3);
\draw[thick]  (yy2) ..controls (6.8+6, -3)..  (yy3);

\node at (5.3+1.5,4-6) {$H_0$};

\node at (-3+5.3+1.5,4-6) {$H_1$};

\node at (3+5.3+1.5,4-6) {$H_1$};

\node at (-6+5.3+1.5,4-6) {$H_2$};

\node at (6+5.3+1.5,4-6) {$H_2$};

\node at (5.3+1.5,4-9) {$L'(x_i)=L'(y_i)=\{1234567\}$.};

\end{tikzpicture}
\caption{ $H'$ is not $L'$-colourable}
\label{fig-HH}
\end{figure}

The boundary of $H'$ consists of two paths  from $x_3$ to $y_3$: $P_1$ on the top and $P_2$ on the bottom (with
$V(P_1) \cap V(P_2) = \{x_1, x_2,x_3, y_1, y_2, y_3 \} $).
Next we add two vertices $u$ and $v$ to $H'$ as indicated in Figure \ref{fig-gadget}, where $v$ is adjacent to every vertex on $P_1$, and $u$ is adjacent to every vertex on $P_2$ 
(not every edge incident to $v$ (or $u$) is drawn in the figure). 

Let $L(u)=\{a\}, L(v)=\{b\}$, and add colour $a$ to the lists of neighbours of $u$, and $b$ to the lists of neighbours of $v$. By Observation \ref{obs-1}, the resulting graph $H$ is not $L$-colourable. Note that 
$L(x_i)=L(y_i)=\{a,b,1,2,3,4,5,6,7\}$ and for every other vertex $x$ of $H$, $|L(x)|=d_H(x)$.

\begin{figure}[!htb]
\centering 
    \begin{tikzpicture}[scale=0.6, every node/.style={font=\small},
    base/.style={circle,draw,minimum size=6mm,inner sep=0pt},
    minor/.style={circle,draw,minimum size=2mm,inner sep=0pt,fill=black},
    matline/.style={thick},
    crossedline/.style={thick,dashed,red}
  ]

\node[base] (v) at (6.8, 0) {$v$};
\node[base] (u) at (6.8, -4) {$u$};

\draw[thick] (u) -- (xx1);
\draw[thick] (u) ..controls (6.8-3, -3.5).. (xx2);
\draw[thick] (u) ..controls (6.8-6, -3.5).. (xx3);
\draw[thick] (u) -- (yy1);
\draw[thick] (u) ..controls (6.8+3, -3.5).. (yy2);
\draw[thick] (u) ..controls (6.8+6, -3.5).. (yy3);

\draw[thick] (v) -- (xx1);
\draw[thick] (v) ..controls (6.8-3, -0.5).. (xx2);
\draw[thick] (v) ..controls (6.8-6, -0.5).. (xx3);
\draw[thick] (v) -- (yy1);
\draw[thick] (v) ..controls (6.8+3, -0.5).. (yy2);
\draw[thick] (v) ..controls (6.8+6, -0.5).. (yy3);

\node[base] (xx1) at (5.3+0,4-6)   {$x_1$};

\node[base] (xx2) at (-3+5.3+0,4-6)   {$x_2$};

\node[base] (xx3) at (-6+5.3+0,4-6)   {$x_3$};

\node[base] (yy1) at (5.3+3,4-6)   {$y_1$};

\node[base] (yy2) at (3+5.3+3,4-6)   {$y_2$};

\node[base] (yy3) at (6+5.3+3,4-6)   {$y_3$};

\draw[thick]  (xx1) ..controls (6.8, -1)..  (yy1);
\draw[thick]  (xx1) ..controls (6.8, -3)..  (yy1);

\draw[thick]  (xx1) ..controls (6.8-3, -1)..  (xx2);
\draw[thick]  (xx1) ..controls (6.8-3, -3)..  (xx2);

\draw[thick]  (xx2) ..controls (6.8-6, -1)..  (xx3);
\draw[thick]  (xx2) ..controls (6.8-6, -3)..  (xx3);

\draw[thick]  (yy1) ..controls (6.8+3, -1)..  (yy2);
\draw[thick]  (yy1) ..controls (6.8+3, -3)..  (yy2);

\draw[thick]  (yy2) ..controls (6.8+6, -1)..  (yy3);
\draw[thick]  (yy2) ..controls (6.8+6, -3)..  (yy3);

\node at (5.3+1.5,4-6) {$H_0$};

\node at (-3+5.3+1.5,4-6) {$H_1$};

\node at (3+5.3+1.5,4-6) {$H_1$};

\node at (-6+5.3+1.5,4-6) {$H_2$};

\node at (6+5.3+1.5,4-6) {$H_2$};

\end{tikzpicture}

\caption { $H$ and list assignment $L$, each of $u$ and $v$ is adjacent to every vertex "visible" to them, respectively.}
\label{fig-gadget}
\end{figure}

 Let $G$ be a graph obtained from the disjoint union of $72$ copies $H_i$ of $H$ by identifying all the copies of $v$ into a single vertex (also named as $v$) and all the copies of $u$ into a single vertex (also named as $u$), and   adding edges $y_3^{(i)}x_3^{(i+1)}$  (where $y_3^{(i)}$ and $x_3^{(i)}$ are the copies of $y_3$ and $x_3$ in $H_i$)  for $i=1,2,\ldots, 71$, and adding an edge connecting $v$ and $u$. Then $G$ is a non-complete planar graph. Now we show  that the graph $G$ is 3-connected. For any 2-subset $S$ of $V(G)$, if $S=\{v,u\}$, then obviously $G-S$ is connected. If $S\ne \{v,u\}$, say $v \notin S$, then distinct copies of $H$ are connected via $v$ in $G-S$. As for each copy $H_i$ of $H$, $H_i-S$ is connected, we conclude that $G-S$ is connected.
     Thus $G$ is 3-connected.

     Let $L(v)=L(u)=\{a,b,c,d,e,f,g,h,i\}$. There are $72$ possible $L$-colourings $\phi$ of $v$ and $u$. Each such a colouring $\phi$ corresponds to one copy of $H$. We define the list assignment of the corresponding  copy of $H$ as $L$ by replacing $a$ with $\phi(u)$ and replacing $b$ with $\phi(v)$ (since $vu$ is an edge of $G$, $\phi(v) \ne \phi(u)$).
     It is easy to verify that $|L(x)|= \min\{d_G(x),9\}$ for any $x\in V(G)$. As every possible $L$-colouring of $v$ and $u$ cannot be extended to an $L$-colouring of some copy of $H$, we conclude that $G$ is not $L$-colourable. Hence $G$ is not degree-truncated $9$-choosable.

\section{The upper bound} 

This section proves the following result.

\begin{theorem}
    \label{thm-main}
    Every 3-connected non-complete planar graph is degree-truncated 11-choosable, i.e., $\operatorname{ch}^{\text{\st{d}}}(\mathcal{P}) \le 11$. 
\end{theorem}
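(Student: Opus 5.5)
We argue by contradiction with a minimal counterexample. Fix a 3-connected non-complete planar graph $G$ and an $f$-assignment $L$ with $f(v)=\min\{11,d_G(v)\}$ such that $G$ is not $L$-colourable. Split $V(G)$ into $X=\{v: d_G(v)\ge 11\}$ (``big'' vertices, with $|L(v)|\ge 11$) and $Y=V(G)-X$ (``small'' vertices, with $|L(v)|\ge d_G(v)$). The strategy is two-stage.
\begin{enumerate}
\item Colour $G[X]$ from lists that avoid certain colours.
\item Extend the colouring to $Y$ by a degree-choosability argument.
\end{enumerate}
When $X$ is coloured, each $v\in Y$ retains at least $d_G(v)-d_X(v)=d_{G[Y]}(v)$ colours. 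By the Erd\H{o}s--Rubin--Taylor/Vizing characterization, the extension succeeds on every component $C$ of $G[Y]$ that is not a Gallai tree. The extension also succeeds on a component where some vertex keeps a surplus colour, i.e.\ where some neighbour in $X$ is coloured with a colour outside that vertex's list, or two $X$-neighbours share a colour.

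The key steps, in order, are as follows.
\begin{enumerate}
\item Show that components $C$ of $G[Y]$ which are Gallai trees need special care. Using 3-connectivity and $G$ non-complete, each such $C$ has at least three attachment vertices in $X$. Also, each vertex of $C$ has at least one $X$-neighbour, unless $C$ is in a position to be handled by degree-choosability.
\item For each bad component $C$, choose a designated vertex $x_C\in X$ and a designated neighbour $y_C\in C$. Forbid at $x_C$ a colour choice that would fail, namely require $\phi(x_C)\notin L(y_C)$ where possible. Equivalently, delete from $L(x_C)$ the colours that would make $C$ uncolourable. Since $C$ is a Gallai tree with $|L(y)|=d(y)$ for $y\in C$, essentially only one forbidden pattern arises, so each bad component costs one colour at $x_C$.
\item Use planarity to bound the number of components assigned to each $x\in X$. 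Contract each bad component $C$ to a single vertex; this is adjacent to at least three vertices of $X$. The bipartite planar graph between $X$ and these contracted vertices has fewer than $2(|X|+\#C)$ edges. Since each contracted vertex has degree at least $3$, there are at most $2|X|$ of them. Hence there is an orientation or assignment (Hakimi-type, via a discharging/Euler count) giving each $x\in X$ at most a bounded number of components, about $4$ or fewer after also accounting for edges of $G[X]$.
\item Finish by colouring $G[X]$. Since $G[X]$ is planar it is 5-degenerate/5-choosable, and in fact has an AT-orientation with out-degree at most $4$ (Zhu's result that planar graphs are 5-AT, or just degeneracy). Each $x$ therefore needs at most $5$ colours for $G[X]$ plus at most $6$ colours removed for bad components, and $5+6=11$ is exactly the budget. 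The assignment in step 3 must therefore be tuned so that each big vertex is charged at most $6$ colours; I would get this by combining Euler's formula for the planar graph formed by $X$ and the contracted components.
\end{enumerate}

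The main obstacle is step 3 together with the precise accounting in step 2. One must show that a planar structure allows charging each bad component to a big vertex so that the charges plus the choosability requirement of $G[X]$ fit within $11$. This is likely done by a counting argument over faces: components with exactly three attachments behave like triangular faces of a planar graph on $X$. It may also need a refinement, e.g.\ charging the component only when needed, or using the two-colour flexibility of Gallai-tree blocks. I would first try to prove that the auxiliary graph obtained from $G[X]$ by adding one edge or vertex per bad component has an orientation with out-degree at most $10$ in total demand, and then apply Alon--Tarsi.
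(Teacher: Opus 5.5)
The gap is in your step 2, and the rest of your accounting depends on it. Whether a Gallai-tree component $C$ ends up with a bad list assignment depends on the \emph{joint} colouring of all its attachment vertices. So one forbidden colour at one designated vertex $x_C$ cannot prevent it. A one-vertex component already shows this. Let $C=\{y\}$ have $X$-neighbours $x_1,\ldots,x_k$ with $k\ge 3$ and $|L(y)|=k$. For any choice $x_C=x_i$ and any forbidden colour $c$, a colouring in which $x_1,\ldots,x_k$ use the colours of $L(y)$ bijectively with $\phi(x_i)\neq c$ leaves nothing for $y$. The single-vertex remedy you mention, $\phi(x_C)\notin L(y_C)$, costs $d_G(y_C)$ colours (up to $10$), not one. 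Lemma~\ref{lem-Gallai} makes the \emph{resulting} bad lists $L^\phi$ rigid, but it does not let a single vertex control the set of bad attachment colourings. A global sanity check confirms that step 2 cannot be repaired as stated. Combine it with your step 3 count (at most four components per big vertex; Hall's theorem with the Euler bound even gives two). A one-colour charge would then prove, verbatim with $11$ replaced by $9$, that every graph in $\mathcal{P}$ is degree-truncated $9$-choosable. That contradicts the lower-bound construction of Section~2.

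The missing idea is to reverse your incidence structure. A component must be guarded by almost all of its attachment vertices, each paying a few colours, not by one vertex paying one colour. The paper adds edges inside $V_2=\{v: d_G(v)\ge 11\}$ so that each face of $G[V_2]$ hosts at most one component $Q$ of $G[V_1]$. Lemma~\ref{lem-protector} (from \cite{ZZZ25}) then makes all but at most two boundary vertices of $\theta_Q$ protectors of $Q$, while each big vertex protects at most two components. Only the at most two non-protectors are uncontrolled. Hence a vertex $y\in Q$ that has a protector neighbour and satisfies $d_Q(y)+|N_W(y)|\le 2$ can be secured cheaply. Forbid at every protector adjacent to $y$ a set $A\subseteq L(y)$ of size $d_Q(y)+|N_W(y)|+1\le 3$; this set exists because $|L(y)|=d_G(y)$. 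The non-protectors cannot remove enough of $A$.

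The real work is Lemma~\ref{lem-key}: a valid assignment with at most three forbidden colours per protector always exists. This occupies all of Section~4, which uses:
\begin{itemize}
\item the structure from Proposition~\ref{prop};
\item confined non-protectors;
\item a reduction to exactly two leaf blocks, each $K_2$ or $K_3$;
\item arguments exploiting (P2) of Corollary~\ref{cor-suff}, culminating in a parity argument along the chain of blocks.
\end{itemize}
Each big vertex then loses at most $2\cdot 3=6$ colours, and Thomassen's theorem colours $G[V_2]$ from the remaining five. Your arithmetic $5+6=11$ is right, but the $6$ has to be ``two components, three colours each'', which your scheme cannot produce.

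Finally, a minimal counterexample argument needs an induction-friendly strengthening, namely the paper's Theorem~\ref{thm-main2}, with properly connected components and a vertex $v^*$ of list size $1$. Deleting vertices destroys 3-connectivity, so the reductions you allude to in step 1 are otherwise unavailable.
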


For the purpose of using induction, we prove a stronger and more technical result.

Assume $G$ is a plane graph. Let  $V_1=\{v\in V(G): d_G(v) \le 10\}$ and $V_2=V(G)-V_1$. By adding edges between vertices in $V_2$, we may assume that each face $\theta$ of $G[V_2]$ contains at most one connected component of $G[V_1]$. 
    For a connected component $Q$ of $G[V_1]$, let $\theta_Q$ be the face of $G[V_2]$ that contains $Q$.

For a cycle $C$ in $G $, denote by ${\rm int}(C)$ (respectively, ${\rm ext}(C)$) the set of 
vertices of $G$ contained in   the finite region  (respectively, the infinite region) with boundary $C$.

\begin{definition}\label{def-connected}
  We say a connected component $Q$ of $G[V_1]$  is {\em properly connected to $V_2$} if the following hold:
\begin{enumerate}
\item[(P1)] Each vertex on the boundary of $\theta_Q$ is adjacent to some vertex of $Q$.
    \item[(P2)] If $C$ is a cycle in $G[V(\theta_Q)]$ and $V(Q) \subseteq {\rm int}(C)$, 
    then for any vertex $v$ of $Q$, $G$ has three   paths contained in $V(C) \cup {\rm int}(C)$
    connecting $V(C)$ and $v$, and these paths are vertex disjoint, except that they  share the same end vertex $v$. 
   \end{enumerate}
\end{definition}

It is easy to see that if $G$ is 3-connected, then each connected component $Q$ of $G[V_1]$ is properly connected to $V_2$ (see \cite{JXXZ} for a rigorous proof). Thus Theorem \ref{thm-main} follows from the following result.

\begin{theorem}
    \label{thm-main2}
    Assume that $G$ is a connected plane graph, and  $V_1, V_2$ is a partition of $V(G)$,  where $V_2 \ne \emptyset$ and each face of $G[V_2]$ contains at most one connected component of $G[V_1]$.   Assume that each connected component $Q$ of $G[V_1]$ is properly connected to $V_2$.  Assume that $\theta^*$ is the infinite face of $G[V_2]$ and $v^* \in V(\theta^*)$. If $f: V(G) \to \mathbb{N}$ is defined as follows:
    \[
    f(v) = \begin{cases} 
    1, &\text{ if $v = v^*$ and $v^*$ is not an isolated vertex in $G[V_2]$}, \cr
    11, &\text{ if $v \in V_2 -\{v^*\}$ or $v=v^*$ is an isolated vertex in $G[V_2]$}, \cr 
    d_G(v), &\text{ if $v \in V_1$}. 
    \end{cases} 
    \]
    then $G$ is $f$-choosable.
\end{theorem}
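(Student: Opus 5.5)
We argue by induction on $|V(G)|$, with $(G,V_1,V_2,v^*)$ a minimal counterexample and $L$ an $f$-assignment with no $L$-colouring. The overall strategy is to colour $G[V_2]$ first, with enough care that every component $Q$ of $G[V_1]$ can then be coloured from its residual lists. Those residual lists have size $d_G(v)-|N(v)\cap V_2|+(\text{number of repeated colours on } N(v)\cap V_2)\ge d_Q(v)$, so each $Q$ is colourable provided that either $Q$ is not a Gallai tree, or some vertex of $Q$ has strictly more than $d_Q(v)$ residual colours (for instance because two of its $V_2$-neighbours received the same colour). This reduces the problem to a colouring problem on $G[V_2]$ in which, for each face $\theta_Q$ whose component $Q$ is a Gallai tree, we must either produce a repeated colour around some $v\in V(Q)$ or avoid a small set of forbidden colours at designated boundary vertices.

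The first step is to handle the reducible configurations.

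\emph{Separating structures.} If $G[V_2]$ has a cut vertex or a separating cycle through $V(\theta_Q)$, we split $G$ along it. We colour the outer part by induction, keeping $v^*$ with $f(v^*)=1$. The inner part is then coloured by induction with a precoloured vertex playing the role of $v^*$. Condition (P2) is exactly what guarantees that the inner part still satisfies the hypotheses: its components remain properly connected.

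\emph{Small components.} If $Q$ is not a Gallai tree, we delete $Q$, colour the rest by induction (the hypotheses are inherited, with $V_2$-degrees unaffected since $f=11$ there), and then extend to $Q$ by the degree-choosability characterisation of \cite{ERT,Vizing}. So we may assume that every component $Q$ is a Gallai tree. By (P1) and (P2), each such $Q$ then has at least three attachments to every boundary cycle.

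Next we colour $G[V_2]$ with lists of size $11$ in the style of Thomassen. We triangulate $G[V_2]$ (adding edges inside faces, which is permitted by the setup) and run Thomassen's $5$-choosability argument with the outer face precoloured at $v^*$, which has $f(v^*)=1$. This leaves a surplus of $11-5=6$ colours at each $V_2$-vertex. That surplus must be spent to handle the Gallai-tree components. For each face $\theta_Q$ we pick a vertex $v\in V(Q)$ together with two of its $V_2$-neighbours $w_1,w_2$ on $\partial\theta_Q$. We then either identify or synchronise their colours: add the edge-free constraint ``$w_1,w_2$ get the same colour'' by contracting $w_1w_2$ through the face $\theta_Q$, which preserves planarity since $Q$ lies in that face. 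The resulting identifications form a planar structure on $V_2$, with each vertex of $V_2$ absorbing constraints from several faces. The counting has to show that the merged lists, of size at least $11$ minus the losses per identification, still admit a Thomassen-type colouring. Alternatively, we can choose the colour of one boundary vertex from a list avoiding the at most $d_Q$ colours forced by $Q$.

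I expect this step to be the main obstacle: bounding how many faces $\theta_Q$ impose constraints on a single $V_2$-vertex, and showing that $11$ colours suffice where $12$ was used in \cite{JXXZ}. My first attempt would be a discharging/orientation argument. We orient $G[V_2]$ together with the face-constraints so that each $V_2$-vertex has out-degree at most $10$, and then apply the Alon--Tarsi theorem \cite{AT} combined with the $V_1$ extension. This uses the planar bound that a bipartite incidence structure between $V_2$ and faces has average degree less than $4$, so that each vertex needs only a bounded number of extra colours beyond the $5$ from Thomassen's argument plus one.

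Finally, if $v^*$ is isolated in $G[V_2]$, then $f(v^*)=11$ and its face contains a single component. The argument above still applies after we precolour $v^*$ with any colour and treat it as the special vertex.
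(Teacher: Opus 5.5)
\textbf{Verdict: there is a genuine gap.} It sits exactly at the step you flag as the main obstacle. Nothing in the proposal explains how the $11-5=6$ spare colours at a vertex of $V_2$ pay for \emph{all} Gallai-tree components in the faces around it, and the devices you suggest do not work.
\begin{itemize}
\item Identifying or synchronising $w_1,w_2$ is not available in list colouring. The common colour must lie in $L(w_1)\cap L(w_2)$, which can be empty for an arbitrary $11$-assignment. Also, $w_1w_2$ may already be an edge of $G[V_2]$ (e.g.\ when $\theta_Q$ is a triangle). And a vertex of $V_2$ can lie on arbitrarily many faces, so the cost per vertex is unbounded.
\item Letting one boundary vertex avoid ``the colours forced by $Q$'' is not quantified, and the naive version is far too expensive. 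To guarantee a surplus at $v\in V(Q)$ through the colour of a single neighbour, you must forbid all of $L(v)$ there, up to $10$ colours.
\item An Alon--Tarsi orientation of $G[V_2]$ says nothing about the residual lists on $V_1$, since the face constraints are not edges.
\item You only use surplus at one vertex (criterion (P1) of Corollary~\ref{cor-suff}). You omit (P2), that two non-cut vertices of one block receive different residual lists, which the paper needs repeatedly.
\end{itemize}

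\textbf{What is missing} are the two lemmas that make the count $11=5+2\cdot 3$ work.
\begin{itemize}
\item Lemma~\ref{lem-protector} gives a subgraph $H$ of the vertex--face incidence graph $\Theta$ with $d_H(v)\le 2$ for $v\in V_2$ and $d_H(\theta)\ge d_\Theta(\theta)-2$. So every vertex of $V_2$ protects at most two components, and every component has at most two non-protectors. Your remark that $\Theta$ has average degree less than $4$ would have to be sharpened to exactly this.
\item Lemma~\ref{lem-key}, which is the bulk of the paper, says every $\theta_Q$ has a valid assignment. That is, there are sets $S_Q(u)$ of size at most $3$ on protectors and empty on non-protectors, such that every colouring of $\theta_Q$ avoiding them leaves a non-bad list assignment on $Q$.
\end{itemize}
The proof of Lemma~\ref{lem-key} uses:
\begin{itemize}
\item the Erd\H{o}s--Rubin--Taylor description of bad lists (Lemma~\ref{lem-Gallai});
\item non-protectors confined to a colour;
\item a structural analysis forcing $Q$ to have exactly two leaf blocks, both complete, with each non-protector having exactly one neighbour in each;
\item a parity argument with a colour $b$ along the block chain.
\end{itemize}
Without an analogue of this per-component $3$-colour lemma, your outline does not yield $11$ or any specific bound.

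\textbf{Two smaller errors.}
\begin{itemize}
\item In the isolated-$v^*$ case you cannot precolour $v^*$ arbitrarily. Take $V_2=\{v^*\}$, $Q$ an odd cycle all of whose vertices are adjacent to $v^*$, and $L(x)=\{c,1,2\}$ on $Q$. Colouring $v^*$ with $c$ leaves lists $\{1,2\}$ on an odd cycle, which is fatal. The paper instead makes $v^*$ the protector and forbids at most $3$ colours there.
\item Splitting along a separating cycle of $G[V_2]$ is not covered by induction. The inner part would have a whole precoloured cycle rather than one precoloured vertex. The paper's reductions only split at a single vertex.
\end{itemize}
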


The remainder of this paper is devoted to the proof of Theorem \ref{thm-main2}.

Assume that the theorem is not true, and $G$ is a counterexample with minimum number of vertices, and $L$ is a list assignment of $G$ that satisfies the condition of Theorem \ref{thm-main2}, and $G$ is not $L$-colourable.

For  $X, Y \subseteq V(G)$, let $$N_X(Y)= N_G(Y) \cap X,$$
and we write $N_X(v)$ for $N_X(\{v\})$.

\begin{definition}
    Assume $X$ is a subset of $V(G)$ and $\phi$ is an $L$-colouring of $G[X]$. Let $L^\phi$ be the list assignment of $G-X$ defined as $$L^\phi(v)=L(v)-\{\phi(u):u\in N_X(v)\}$$ for each vertex $v\in V(G)-X$.
\end{definition}

The following easy observation (especially the "moreover" part) will be frequently used (often implicitly) in later arguments.

\begin{obs}
    \label{obs-lphi}
    Assume $X$ is a subset of $V(G)$ and $\phi$ is an $L$-colouring of $G[X]$. If $v \notin X$ and $|L(v)| = d_G(v)$, then 
    $|L^{\phi}(v)| \ge d_{G-X}(v)$. Moreover, if for some $u \in N_X(v)$, 
    $\phi(u) \notin L(v)$ or there are distinct vertices $u, u' \in N_X(v)$ with $\phi(u) = \phi(u')$, then 
    $|L^{\phi}(v)| > d_{G-X}(v)$.
\end{obs}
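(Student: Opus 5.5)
The plan is a direct count: compare the number of colours removed from $L(v)$ with the number of neighbours of $v$ in $X$. Fix $v\notin X$ with $|L(v)|=d_G(v)$, and let $S=\{\phi(u):u\in N_X(v)\}$ be the set of colours used by $\phi$ on the coloured neighbours of $v$. By the definition of $L^\phi$,
\[
L^\phi(v)=L(v)-S=L(v)-(S\cap L(v)),
\]
so $|L^\phi(v)|=|L(v)|-|S\cap L(v)|$.

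The key inequality is the chain
\[
|S\cap L(v)|\le |S|\le |N_X(v)|.
\]
The second inequality holds because $S$ is the image of $N_X(v)$ under $\phi$. Since the neighbours of $v$ split into those in $X$ and those in $G-X$, we have $d_G(v)=|N_X(v)|+d_{G-X}(v)$. Combining these gives
\[
|L^\phi(v)|\ge d_G(v)-|N_X(v)|=d_{G-X}(v),
\]
which is the first assertion.

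For the ``moreover'' part, it suffices to show that one of the two inequalities in the chain is strict. If $\phi(u)\notin L(v)$ for some $u\in N_X(v)$, then $\phi(u)\in S\setminus L(v)$, so $|S\cap L(v)|<|S|$. If instead $\phi(u)=\phi(u')$ for distinct $u,u'\in N_X(v)$, then $\phi$ is not injective on $N_X(v)$, so $|S|<|N_X(v)|$. In either case $|S\cap L(v)|<|N_X(v)|$, and hence $|L^\phi(v)|>d_{G-X}(v)$.

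There is no real obstacle. The only point requiring care is to subtract only the colours of $S$ that actually lie in $L(v)$, rather than all of $S$, since that is exactly where the strict inequality in the first case comes from.
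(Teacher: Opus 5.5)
Your proof is correct and complete: the count $|L^\phi(v)| = |L(v)| - |S\cap L(v)|$ together with the chain $|S\cap L(v)|\le |S|\le |N_X(v)|$ and the identity $d_G(v)=|N_X(v)|+d_{G-X}(v)$ gives both assertions, and each hypothesis of the ``moreover'' part makes one of the two inequalities strict. The paper states this as an easy observation and gives no proof, and your direct count is the natural argument it leaves to the reader.
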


Assume  $Q$ is a connected component of $G[V_1]$. A \textit{leaf block} of $Q$ is a block $B$ of $Q$ that contains at most one cut-vertex of $Q$. If $B$ contains one cut-vertex $v$ of $Q$, then $v$ is called the {\em root} of $B$. The other vertices of $B$ are called {\em non-root}  vertices. If $Q$ is 2-connected, then  $Q$ itself is called a \textit{leaf block} and all vertices of $Q$ are non-root vertices. For a leaf-block $B$ of $Q$, we denote by $U_B$ the set of non-root vertices of $B$.

The following proposition is a variation and combination of  Lemma~3, Lemma~4 and Corollary~1 of \cite{JXXZ}.  
We sketch a proof of this proposition, and refer to \cite{JXXZ} for the detailed argument. 
\begin{proposition}\label{prop}
$G[V_2]$ is connected and for each connected component $Q$  of $G[V_1]$, the following hold:
\begin{itemize}
    \item[(1)] $Q$ is a Gallai tree.
    \item[(2)] Every block  of  $Q$ is either a complete graph $K_n$ with $n \le 3$ or an odd cycle.
    \item[(3)] For each non-cut vertex $v$ of $Q$, $d_Q(v) \le 2$  and is adjacent to some vertex in $V_2$.   
    \item[(4)] If $\theta_Q$ is a finite face of $G[V_2]$, then $\theta_Q=C_Q$ is a cycle.
    If $\theta_Q$ is an infinite face of $G[V_2]$ and $v^*$ is not an isolated vertex in $G[V_2]$, then each non-cut vertex $v$ of $Q$ satisfies $N_{\theta_Q}(v)\not=\{v^*\}$.
\end{itemize}
\end{proposition}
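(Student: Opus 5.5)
We work throughout with the minimum counterexample $G$ and the non-colourable list assignment $L$ fixed above. Each item is proved by building a smaller instance of Theorem \ref{thm-main2} (or colouring directly), using minimality, and extending the colouring to reach a contradiction. The basic tool is Observation \ref{obs-lphi}. We colour a suitable part $X$ of the graph first. Every vertex $v\in V_1$ then keeps $|L^\phi(v)|\ge d_{G-X}(v)$, so the remaining $V_1$-part is degree-list-assigned. The Erd\H{o}s--Rubin--Taylor/Vizing characterisation then says it is colourable unless it is a Gallai tree with tight lists.

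\emph{Connectivity of $G[V_2]$.} Suppose $G[V_2]$ is disconnected. Since $G$ is connected and each face of $G[V_2]$ contains at most one component of $G[V_1]$, some component $Q$ of $G[V_1]$ lies in a face whose boundary meets two components of $G[V_2]$. I would cut $G$ into two pieces along $Q$: $G_1$, containing the component of $G[V_2]$ that holds $v^*$, and $G_2$, the rest with $Q$ duplicated or assigned suitably. The order is:
\begin{enumerate}
\item Colour $G_2$ minus $Q$ by minimality, choosing a new special vertex $v^*$ on its outer face.
\item Colour $G_1$ minus $Q$ by minimality, keeping the original $v^*$.
\item Extend to $Q$. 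By (P2), $Q$ has at least one extra neighbour, which gives slack through Observation \ref{obs-lphi}.
\end{enumerate}
The care needed is that $f(v^*)=1$ in the subinstance is legitimate. The clause about isolated $v^*$ is exactly what handles the case where the new special vertex is isolated.

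\emph{Items (1)--(3).} For (1), colour $G-V(Q)$ by minimality. The hypotheses pass to $G-V(Q)$: $V_2$ is unchanged, faces merge but contain fewer $V_1$-components, and proper connectivity is inherited. Each $v\in Q$ then has $|L^\phi(v)|\ge d_Q(v)$. If $Q$ were not a Gallai tree it would be degree-choosable and the colouring would extend, so $Q$ is a Gallai tree. Moreover the extension must fail for \emph{every} colouring of $G-V(Q)$. This means that in every such colouring the neighbours of $Q$ in $V_2$ receive distinct colours, all lying in the relevant lists.

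For (2) and (3), take a leaf block $B$ and a non-root vertex $v$. By (P2) there are three disjoint paths from $\theta_Q$ to $v$, so $Q\cup\theta_Q$ has many $V_2$-neighbours. The key step is to recolour or uncolour one $V_2$-vertex $w$ adjacent to $v$. Since $|L(w)|=11$ while planarity bounds the local structure, $w$ can be chosen to repeat a colour of another neighbour of $Q$, or to use a colour outside $L(v)$. This gives $|L^\phi(v)|>d_Q(v)$, and the extension succeeds because a degree-assignment with one vertex of surplus on a connected graph is always colourable.

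Planarity is used as follows. If some block of $Q$ is $K_n$ with $n\ge 4$, or a non-cut vertex has $d_Q(v)\ge 3$, then a non-root vertex $v$ has too few faces available to reach $\theta_Q$. This contradicts (P1)/(P2), or it forces two vertices of the block to share $V_2$-neighbours in a way that allows such a colour repetition. For (3), a non-cut vertex $v$ with no $V_2$-neighbour has $d_G(v)=d_Q(v)\le 2$, which contradicts the three disjoint paths required by (P2).

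\emph{Item (4).} If the finite face $\theta_Q$ is not bounded by a cycle, then $\theta_Q$ has a cut vertex $x$. I would split $G$ at $x$ into a part inside a cycle of $\theta_Q$ and the rest, and colour the outer part first. The inner part is then handled with $x$ as its new $v^*$, which has $f=1$ and is precisely the precoloured vertex. This yields a colouring by minimality, a contradiction. For the infinite face, suppose some non-cut vertex $v$ has $N_{\theta_Q}(v)=\{v^*\}$. Then $v$ effectively has a precoloured single neighbour, and the leaf-block argument above supplies the surplus needed to extend the colouring.

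\emph{Main obstacle.} The main obstacle is verifying that every reduced graph still satisfies all hypotheses of Theorem \ref{thm-main2}, above all (P2) for the remaining components and the choice of the new $v^*$. This bookkeeping is where \cite{JXXZ} does the detailed work, and I would follow their Lemmas 3--4 and Corollary 1 closely there.
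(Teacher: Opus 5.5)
Your (1) and the finite-face half of (4) agree with the paper. The arguments for (2)--(3), for the infinite-face half of (4), and for connectivity of $G[V_2]$ have genuine gaps.

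\textbf{Items (2)--(3) and the infinite-face half of (4).} Your key step is to recolour or uncolour a $V_2$-neighbour $w$ of $v$ so that it repeats a colour or avoids $L(v)$. This step is not available. The colouring of $G-V(Q)$ is handed to you by minimality, and you cannot edit it. Moreover, $w$ may have arbitrarily many neighbours in $G[V_2]$ (planarity gives no local degree bound), and these can occupy every other colour of $L(w)$. If such local repairs always existed, the paper's protector/valid-assignment machinery would be superfluous.

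Nor do (P1)/(P2) force a non-cut vertex to have a $V_2$-neighbour. For example, the vertex of a $K_4$-block drawn inside the triangle of the other three satisfies (P2) via three disjoint paths through those three vertices. Your degree count for (3) has two further problems. It presupposes $d_Q(v)\le 2$. It also says nothing when $\theta_Q$ is the infinite face: then no cycle $C$ of $G[V(\theta_Q)]$ has $V(Q)\subseteq{\rm int}(C)$, so (P2) is vacuous.

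The missing idea is a one-vertex reduction. Suppose a non-cut vertex $v$ of $Q$ has no neighbour in $V_2$. Colour $v$ \emph{first} with any colour of $L(v)$. No $V_2$-list shrinks, $Q-v$ is still connected, and its vertices keep degree lists. So $G-v$ with $L^{\phi}$ is a smaller instance of Theorem~\ref{thm-main2}, and minimality gives a contradiction. Once every non-cut vertex has a $V_2$-neighbour, planarity rules out $K_n$-blocks with $n\ge 4$. In a drawn $K_4$, one vertex, together with everything hanging from it, is trapped inside a triangle of the others, away from $V_2$. Then (1) gives (2) and (3).

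The infinite-face half of (4) uses the same trick, not your leaf-block argument. Colour $v$ first with some $a\in L(v)\setminus L(v^*)$, which exists since $|L(v^*)|=1$. Then $L^{\phi}(v^*)=L(v^*)$, and you apply minimality to $G-v$.

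\textbf{Connectivity of $G[V_2]$.} Colouring both sides of $Q$ independently and then extending to $Q$ fails at your step 3. Because $|L(v)|=d_G(v)$, extra neighbours give no slack. Observation~\ref{obs-lphi} produces surplus only when a neighbour's colour lies outside $L(v)$ or two neighbours of $v$ share a colour. Independent colourings guarantee neither, so $L^{\phi}$ may well be a bad assignment for the Gallai tree $Q$.

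The paper never colours $Q$ by hand. Let $G_1$ be the component of $G[V_2]$ containing $v^*$, and let another component $G_2$ lie in a face $\theta$ of $G_1$.
\begin{enumerate}
\item Let $G'$ be the subgraph induced by the vertices lying in $\theta$, and colour $G-G'$ first by minimality.
\item Then $G'$, which contains $G_2$ together with the $V_1$-component(s) joining it to $G_1$, is a smaller instance with $L^{\phi}$.
\begin{itemize}
\item Its $V_1$-vertices keep degree lists.
\item Its $V_2$-vertices have no coloured neighbours: they are not adjacent to $G_1$, and planarity keeps their edges inside $\theta$.
\item A vertex of $G_2$ on the infinite face of $G'[V_2]$ can serve as the new $v^*$ after shrinking its list to one colour.
\end{itemize}
\item Minimality colours $G'$, and the union of the two colourings is an $L$-colouring of $G$.
\end{enumerate}
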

\begin{proof}
 Assume $G[V_2]$ has two connected parts $G_1$ and $G_2$.  Assume $v^* \in V(G_1)$, and $G_2$ is contained in face $\theta$ of $G_1$. If $G_2$ is contained in the interior (respectively, exterior) of $\theta$, then let $G'$ be the subgraph of $G$ induced by vertices contained in the interior of $\theta$ (respectively, exterior of $\theta$). 

By the minimality of $G$,  $G-G'$ has an $L$-colouring $\phi$. Then $L^{\phi}$ is a list assignment of $G'$ that satisfies the condition of Theorem \ref{thm-main2}. By the minimality of $G$, $G'$ has an $L^{\phi}$-colouring $\psi$. The union $\phi \cup \psi$ is an $L$-colouring of $G$. 

(1) If a connected component $Q$ of $G[V_1]$ is not a Gallai-tree, then by the minimality of $G$, $G-Q$ has an $L$-colouring $\phi$. Then $L^{\phi}$ is a degree-list assignment of $Q$ (see Observation \ref{obs-lphi}), and hence $Q$ has an $L^{\phi}$-colouring $\psi$.    The union $\phi \cup \psi$ is an $L$-colouring of $G$. 

(2) If a connected component $Q$ of $G[V_1]$ has a non-cut vertex $v$ that is not adjacent to any vertex of $V_2$, then let $\phi$ be an $L$-colouring of $v$. Then $L^{\phi}$ is a list assignment of $G-v$ that satisfies the condition of Theorem \ref{thm-main2}, and hence $G-v$ has an $L^{\phi}$-colouring $\psi$. The union $\phi \cup \psi$ is an $L$-colouring of $G$. Thus we may assume that every non-root vertex  of a leaf block of $Q$ is adjacent to a vertex of $V_2$. This implies that no block of $Q$ is a copy of $K_4$. Hence each block of $Q$ is an odd cycle and or a complete graph of order at most 3.

(3) follows from (2) and the assumption that $Q$ is properly connected to $V_2$.

(4) Assume $\theta_Q$ is a finite face and $\theta_Q$ is not a cycle. Let $C$ be a cycle in $V(\theta_Q)$. Then $C$ contains   a cut-vertex $u^*$ of $G[V_2]$.  By the minimality of $G$, $G-{\rm int}(C)$ has an $L$-colouring $\phi$. Let $G'$ be the subgraph of $G$ induced by ${\rm int}(C) \cup \{u^*\}$. Then 
$L^{\phi}$ is a list assignment of $G' \cup \{u^*\}$ that satisfies the condition of Theorem \ref{thm-main2}, with $u^*$ plays the role of $v^*$. Hence $G' \cup \{u^*\}$ has an $L^{\phi}$-colouring $\psi$. The union $\phi \cup \psi$ is an $L$-colouring of $G$.  

Assume that $\theta_Q$ is an infinite face of $G[V_2]$ and $v^*$ is not an isolated vertex in $G[V_2]$ and  there exists a non-cut vertex $v$ of $Q$ with $N_{\theta_Q}(v)=\{v^*\}$.
As $|L(v^*)|=1$, $L(v)-L(v^*) \ne \emptyset$. 
Let $\phi$ be a colouring of $v$ with a color $a\in L(v)\setminus L(v^*)$. Then $L^{\phi}$ is a list assignment of $G-v$ that satisfies the condition of Theorem \ref{thm-main2}, and hence has an $L^{\phi}$-colouring $\psi$. The union $\phi \cup \psi$ is an $L$-colouring of $G$.
\end{proof}

Let $F(G[V_2])$ be the set of faces of $G[V_2]$, and let $\Theta$ be the bipartite graph with partite sets $V_2$ and $F(G[V_2])$ in which $v\theta$ is an edge if and only if $v$ is a vertex on the boundary of $\theta$. We shall need the following lemma proved in \cite{ZZZ25}:

\begin{lem}
    \label{lem-protector}
    There is a spanning subgraph $H$ of $\Theta$ such that $d_H(v) \le 2$ for each vertex $v \in V_2$, $d_H(\theta) \ge d_{\Theta}(\theta)-2$ for each $\theta \in F(G[V_2])$. Moreover, if $|V_2| \ge 2$, then $d_H(v^*)=0$ and $d_H(\theta^*)=d_{\Theta}(\theta^*)-1$, if $V_2 = \{v^*\}$, then $H$ consists of a single edge $v^*\theta^*$ (in this case, $G[V_2]$ has a single face $\theta^*$).
\end{lem}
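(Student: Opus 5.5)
The plan is to reformulate the lemma as an orientation problem on the bipartite graph $\Theta$ and then apply Hakimi's orientation theorem, using Euler's formula for the planarity count. For each edge $v\theta$ of $\Theta$ we decide whether it is \emph{kept} (placed in $H$, charged to $v$) or \emph{deleted} (charged to $\theta$). We encode this as an orientation of $\Theta$: orient $v\theta$ towards $v$ if it is kept, and towards $\theta$ if it is deleted. Then $d_H(v)$ equals the in-degree of $v$, and $d_\Theta(\theta)-d_H(\theta)$ equals the in-degree of $\theta$. The main requirements $d_H(v)\le 2$ and $d_H(\theta)\ge d_\Theta(\theta)-2$ therefore say exactly that every node of $\Theta$ has in-degree at most $2$.

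The key fact is that $\Theta$ is a simple planar bipartite graph. It is the vertex--face incidence graph of the plane graph $G[V_2]$, which is connected by Proposition~\ref{prop}. Hence every subgraph $S$ of $\Theta$ with at least $3$ vertices satisfies $|E(S)|\le 2|V(S)|-4$, and subgraphs with fewer vertices satisfy $|E(S)|\le 2|V(S)|$ trivially. By Hakimi's theorem, a graph has an orientation with in-degree at most $c(x)$ at each node $x$ if and only if $|E(S)|\le \sum_{x\in V(S)}c(x)$ for every induced subgraph $S$. With $c\equiv 2$ this gives the main part of the lemma.

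For the extra conditions when $|V_2|\ge 2$, we pre-orient the edges at the two special nodes:
\begin{itemize}
\item every edge $v^*\theta$ is oriented towards $\theta$, so it is deleted and $d_H(v^*)=0$;
\item every edge $u\theta^*$ with $u\neq v^*$ is oriented towards $u$, so it is kept and $\theta^*$ loses only the edge $v^*\theta^*$, giving $d_H(\theta^*)=d_\Theta(\theta^*)-1$.
\end{itemize}
The task then reduces to orienting $\Theta'=\Theta-v^*-\theta^*$ with capacities $c'(x)=2-|N_\Theta(x)\cap\{v^*,\theta^*\}|$; note that $v^*\theta^*$ is an edge because $v^*\in V(\theta^*)$. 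Hakimi's condition for $\Theta'$ reads
\[
|E(S)|+|S\cap N(v^*)|+|S\cap N(\theta^*)|\le 2|S| .
\]
Adding $v^*$ and $\theta^*$ to $S$ gives a subgraph of $\Theta$ with $|S|+2\ge 3$ nodes and exactly $|E(S)|+|S\cap N(v^*)|+|S\cap N(\theta^*)|+1$ edges. The planar bound then gives at most $2(|S|+2)-4=2|S|$ for this number, which is the required inequality with one to spare.

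The degenerate case $V_2=\{v^*\}$ is immediate: $G[V_2]$ has the single face $\theta^*$, $\Theta$ is the single edge $v^*\theta^*$, and we take $H=\Theta$. The only step needing care, which I expect to be the main obstacle, is justifying that $\Theta$ is genuinely simple and planar, so that the bound $|E|\le 2|V|-4$ applies. Simplicity holds because $\Theta$ joins $v$ and $\theta$ by at most one edge by definition, even when $v$ appears several times on the boundary walk of $\theta$. For planarity, we place a point inside each face and join it to every incident vertex through the face. If a vertex appears several times on a face boundary, we keep only one of the corresponding curves, which preserves the embedding. Having established this, Hakimi's theorem (or equivalently a max-flow argument) finishes the proof.
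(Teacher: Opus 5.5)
Your proof is correct and complete. The paper gives no proof of this lemma; it quotes it from \cite{ZZZ25}. So your argument is a self-contained replacement for the citation rather than a variant of something in the text.

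Your encoding works as stated: kept edges point into $v$ and deleted edges point into $\theta$, so both degree requirements become ``in-degree at most $2$''. The main part then follows from Hakimi's theorem together with the bound $|E|\le 2|V|-4$ for simple bipartite planar graphs on at least three vertices. The planarity of $\Theta$ is justified correctly by your star-in-each-face drawing with duplicate curves removed. Connectivity of $G[V_2]$ (from Proposition~\ref{prop}) is available in context, so faces are disks and the drawing is routine.

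The neat point is how the extra conditions at $v^*$ and $\theta^*$ are absorbed. After you pre-orient all edges at these two nodes, three facts combine:
\begin{itemize}
\item $\Theta'=\Theta-v^*-\theta^*$ is an induced subgraph.
\item By bipartiteness, every node of $\Theta'$ is adjacent to at most one of $v^*,\theta^*$, so the residual capacities $c'$ lie in $\{1,2\}$.
\item Hakimi's inequality for $S\subseteq V(\Theta')$ is then exactly the planar edge bound for $\Theta[S\cup\{v^*,\theta^*\}]$, with the edge $v^*\theta^*$ providing the one unit of slack.
\end{itemize}

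A decomposition of $\Theta$ into two forests (Nash-Williams), each oriented with in-degree at most $1$, would give the unconstrained part just as easily. However, fixing the behaviour at $v^*$ and $\theta^*$ is awkward in the forest picture. The capacitated form of Hakimi's theorem, or an equivalent flow/Hall argument, is what makes your proof of the ``moreover'' clause a two-line computation.
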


Assume $H$ is a spanning subgraph of $\Theta$ as described in Lemma \ref{lem-protector}.

\begin{definition}
    \label{def-protector}
    If $Q$ is a connected component of $G[V_1]$,  and $v\theta_Q \in E(H)$, then we say $v$ is a {\em protector} of $Q$. If $v\theta_Q \in E(\Theta)-E(H)$, then we say $v$ is a {\em non-protector} of $Q$. 
\end{definition}

It follows from Lemma \ref{lem-protector} that each vertex $v \in V_2$ is a protector of at most two connected components of $G[V_1]$, and each connected component $Q$ of $G[V_1]$ has at most 2 non-protectors. 

It follows from Observation \ref{obs-lphi} that for any $L$-colouring $\phi$ of $G[V_2]$, for any connected component $Q$ of $G[V_1]$, 
$|L^\phi(v)| \ge d_Q(v)$ for each vertex $v\in V(Q)$.
If $Q$ is not $L^\phi$-colourable, then we say $L^\phi$ is a {\em bad list assignment} for $Q$. 

The key idea in the proof of Theorem \ref{thm-main2} is that we find an $L$-colouring $\phi$ of $G[V_2]$ so that for each connected component $Q$ of $G[V_1]$,  $L^\phi$ is not a bad list assignment for $Q$.

The following was proved in \cite{ERT}.

\begin{lem} 
\label{lem-Gallai} 
If $Q$ is a   Gallai-tree, and $L^\phi$ is a bad list assignment for $Q$, then 
   for each block $B$ of $Q$ that is $r$-regular, there is a set $C_B$ of $r$ colours such that (i) if $B$ and $B'$ share a vertex, then $C_B \cap C_{B'} = \emptyset$, and (ii)  for each vertex $v$, $L^\phi(v)= \cup_{v \in B}C_B$. 
\end{lem}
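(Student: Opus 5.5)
Lemma~\ref{lem-Gallai} is the classical Erd\H{o}s--Rubin--Taylor characterisation of bad degree-list assignments on Gallai trees. I would prove it by induction on the number of blocks of $Q$, writing $L' = L^\phi$ for brevity. The hypothesis is that $|L'(v)| \ge d_Q(v)$ for every $v$ (by Observation~\ref{obs-lphi}) and that $Q$ is not $L'$-colourable.

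First I reduce to equality. If some vertex $v$ had $|L'(v)| > d_Q(v)$, then $Q$ would be $L'$-colourable. To see this, order the vertices of the connected graph $Q$ by decreasing distance from $v$ (for instance, reverse BFS order) and colour greedily. Every vertex other than $v$ has at least one neighbour coloured after it, so at most $d_Q(u)-1$ of its neighbours are already coloured, and a colour is available. The vertex $v$ comes last and has $|L'(v)| > d_Q(v)$, so it can also be coloured. Hence in a bad assignment $|L'(v)| = d_Q(v)$ for all $v$.

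\emph{Base case: $Q$ is a single block.} Then $Q$ is $K_n$ or an odd cycle, hence $r$-regular, and every list has exactly $r$ colours. I claim all lists coincide, so that $C_Q = L'(v)$ works. Suppose adjacent vertices $u,w$ have $L'(u) \ne L'(w)$, and pick $c \in L'(u) - L'(w)$. Colour $u$ with $c$. Then $Q-u$ is connected, and its vertex $w$ keeps all $d_Q(w) > d_{Q-u}(w)$ colours, since $c \notin L'(w)$. Every other vertex $x$ of $Q-u$ loses at most one colour while its degree drops by one exactly when $x$ is adjacent to $u$. So the greedy argument above applies in $Q-u$, with $w$ as the special last vertex, and colours $Q-u$, a contradiction. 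Since $Q$ is connected, all lists are equal.

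\emph{Inductive step.} Take a leaf block $B$ with root $x$. I first claim that all non-root vertices of $B$ have the same list $S$, with $|S| = r = $ the regularity of $B$. If two adjacent non-root vertices $u,w$ had different lists, I would colour $u$ with some $c \in L'(u) - L'(w)$ and use the same greedy argument on $Q-u$ with $w$ last. Since $B-u$ together with the rest of $Q$ stays connected, this would colour $Q$. If $B = K_2$ there is only one non-root vertex, and the claim is trivial. A similar argument on the edge from a non-root vertex $u$ to $x$ shows that $S \subseteq L'(x)$: otherwise colour $u$ first with a colour outside $L'(x)$, so that $x$ keeps a surplus colour.

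Next, set $C_B = S$ and define $L''(x) = L'(x) - S$ on $Q' = Q - U_B$, which satisfies $|L''(x)| = d_{Q'}(x)$. Then $L''$ must be bad for $Q'$. Indeed, if $Q'$ had an $L''$-colouring, then $x$ would receive a colour outside $S$, and $B$ minus $x$ could be completed because $B$ is $S$-colourable with $x$ precoloured by a colour not in $S$. Applying induction to $Q'$ yields the sets $C_{B'}$ for the other blocks. Condition (ii) holds at $x$ because $L'(x) = S \cup L''(x)$. Condition (i) holds for $B$ against each block $B'$ at $x$ because $C_{B'} \subseteq L''(x)$, which is disjoint from $S$.

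The main obstacle is the ``completion'' claim, namely that $B$ with all non-root lists equal to $S$ and the root coloured outside $S$ is colourable; this also needs checking for odd cycles. The proof is a direct count. The non-root vertices induce $K_{r}$ (if $B = K_{r+1}$) or a path (if $B$ is a cycle), each with list $S$ of size $r$, and their degrees in $B - x$ allow a greedy colouring. A second point needing care is the bookkeeping that each greedy step in $Q-u$ really starts with a surplus vertex.
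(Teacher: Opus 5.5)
Your proof is correct. The paper does not prove Lemma~\ref{lem-Gallai}; it quotes the result from \cite{ERT}. So your argument is a self-contained replacement for a citation, not an alternative to a proof in the text.

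Your route is the standard proof of the Erd\H{o}s--Rubin--Taylor characterisation:
\begin{enumerate}
\item reduce to $|L'(v)|=d_Q(v)$ by the greedy argument with a surplus vertex coloured last;
\item peel off a leaf block $B$ and show its non-root vertices share a list $S\subseteq L'(x)$;
\item recurse on $Q-U_B$ with $L'(x)-S$ at the root.
\end{enumerate}
This gives more than the citation. Your first step is exactly (P1) of Corollary~\ref{cor-suff}. Condition (P2) follows at once, because a non-cut vertex lies in a single block and so has list $C_B$.

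Your induction is applied to $L''$, which is not of the form $L^\phi$. So you are really proving the version for an arbitrary assignment with $|L'(v)|\ge d_Q(v)$. That is harmless, since it is how the classical result is normally stated, but say so explicitly.

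Two small points should also be made explicit.
\begin{itemize}
\item Going from ``adjacent non-root vertices have equal lists'' to ``all non-root vertices of $B$ have equal lists'' uses that $B-x$ is connected. The greedy step in $Q-u$ uses that $Q-u$ is connected, which holds because $u$ is not a cut vertex of $Q$.
\item In the completion step for an odd cycle, the interior vertices of the path $B-x$ have degree $2=|S|$. So ``degrees allow a greedy colouring'' needs a concrete ordering. Either colour along the path, or colour last a neighbour of $x$, which has degree $r-1<|S|$ in $B-x$.
\end{itemize}
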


\begin{cor}\label{cor-suff}
 Assume $L^\phi$ is a list assignment of a Gallai-tree $Q$. Then $L^\phi$ is not bad if one of the following holds:
\begin{enumerate}
    \item[(P1)] There exists a vertex $v \in V(Q)$ for which $|L^{\phi }(v)| > d_{Q}(v)$.
    \item[(P2)] There are two non-cut vertices $u,v$ of a same block of $Q$ with $L^{\phi}(u) \ne L^{\phi}(v)$.
\end{enumerate}   
\end{cor}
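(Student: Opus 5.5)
We prove Corollary~\ref{cor-suff} by contraposition from Lemma~\ref{lem-Gallai}. Suppose $L^\phi$ is a bad list assignment for the Gallai-tree $Q$, with $|L^\phi(v)|\ge d_Q(v)$ for every $v$ as in the setting. We show that neither (P1) nor (P2) can hold.

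Each block $B$ of a Gallai-tree is a complete graph or an odd cycle, so $B$ is $r_B$-regular for some $r_B$. Lemma~\ref{lem-Gallai} then gives colour sets $C_B$ with $|C_B|=r_B$, pairwise disjoint for blocks sharing a vertex. Moreover, $L^\phi(v)=\bigcup_{B\ni v}C_B$ for every vertex $v$.

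For (P1), fix a vertex $v$. The blocks containing $v$ pairwise share $v$, so their sets $C_B$ are pairwise disjoint. Hence
\[
|L^\phi(v)|=\sum_{B\ni v}|C_B|=\sum_{B\ni v} r_B=\sum_{B\ni v} d_B(v)=d_Q(v),
\]
where the last equality holds because every edge at $v$ lies in exactly one block. So no vertex has $|L^\phi(v)|>d_Q(v)$, and (P1) fails.

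For (P2), let $u,v$ be non-cut vertices of the same block $B$. A non-cut vertex lies in exactly one block, so Lemma~\ref{lem-Gallai} gives $L^\phi(u)=C_B=L^\phi(v)$, and (P2) fails.

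Thus if either (P1) or (P2) holds, $L^\phi$ is not bad. The only points needing care are that the $C_B$ at a common vertex are disjoint, which is exactly what makes the size count in (P1) tight, and that a non-cut vertex belongs to a unique block. Both are immediate, so I expect no real obstacle.
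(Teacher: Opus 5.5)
Your proof is correct and matches the paper's approach: the paper states Corollary~\ref{cor-suff} without proof, as an immediate consequence of Lemma~\ref{lem-Gallai}. Your contrapositive argument supplies exactly that step: disjointness of the sets $C_B$ at a vertex gives $|L^\phi(v)|=d_Q(v)$, and a non-cut vertex lies in a unique block, so $L^\phi(u)=C_B=L^\phi(v)$.
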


Our goal is to find an $L$-colouring $\phi$ of $G[V_2]$ so that for each connected component $Q$ of $G[V_1]$, (P1) or (P2) holds. This goal is achieved by restricting the colours assigned to protectors of $Q$.

\begin{definition}
    \label{def-sq}
    Assume $Q$ is a connected component of $G[V_1]$, and $\theta_Q$ is the face of $G[V_2]$ containing $Q$. 
    If $S_Q$ is an assignment of $V(\theta_Q)$ for which the following hold:
    \begin{enumerate}
        \item $|S_Q(v)| \le 3$ if $v$ is a protector of $Q$, and $S_Q(v)=\emptyset$ if $v$ is a non-protector of $Q$,
        \item if $\phi$ is an $L$-colouring   of $V(\theta_Q)$ such that $\phi(v) \notin S_Q(v)$ for each vertex $v\in V(\theta_Q)$, then there is an $L^\phi$-colouring of $Q$ (i.e., $L^\phi$ is not a bad list assignment for $Q$), 
    \end{enumerate}
    then we say $S_Q$ is a {\em valid assignment } for $\theta_Q$.
\end{definition}

\begin{lem}
    \label{lem-key}
    For each  connected component $Q$ of $G[V_1]$, $\theta_Q$ has a valid assignment.
\end{lem}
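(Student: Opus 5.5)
We will build $S_Q$ explicitly from the structure of $Q$ in Proposition~\ref{prop}, and then check via Corollary~\ref{cor-suff} that any $L$-colouring $\phi$ of $V(\theta_Q)$ avoiding $S_Q$ makes $L^\phi$ not bad for $Q$. Fix a leaf block $B$ of $Q$ and consider its non-root vertices $U_B$. By Proposition~\ref{prop}(3), each $u \in U_B$ has $d_Q(u) \le 2$ and at least one neighbour in $V(\theta_Q)$. By (P1) of Definition~\ref{def-connected} and Lemma~\ref{lem-protector}, every vertex of $\theta_Q$ is adjacent to $Q$, and at most two of them are non-protectors. The aim is to choose, for protectors $w$ adjacent to vertices of $U_B$, a set $S_Q(w)$ of at most three colours that \emph{forbids} the colours which would be needed to make $L^\phi$ bad. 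To have $L^\phi(u)=L^\phi(u')$ for two non-cut vertices $u,u'$ of the same block (Lemma~\ref{lem-Gallai}), the neighbours of $u$ in $\theta_Q$ must receive exactly the colours of $L(u)$ outside the block palette, and those colours must be distinct. So a protector $w$ adjacent to $u$ has its colour essentially forced, up to the choices of the few colours in $C_B$.

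\textbf{Case 1: $B$ is a $K_2$ or $K_1$ leaf.} Then $|U_B| = 1$ with $d_Q(u)\le 1$, so $|L(u)| = d_G(u)$ and $L^\phi(u)$ must be a single colour $c$. If $u$ has a protector neighbour $w$, we set $S_Q(w)$ to be the colours in $L(u)$ that could still be forced on $w$. One has to be careful here, because $u$ may have several neighbours on $\theta_Q$. Using 3-connectivity (Definition~\ref{def-connected}(P2)), $u$ has at least two neighbours in $\theta_Q$ when $Q$ is a single vertex, and in general the leaf has enough $V_2$-neighbours. The simplest rule is to forbid at $w$ the colours of $L(u)$ not in $L(w')$ for the other neighbours $w'$. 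Since $|L(w)| = 11$ while we only need $|S_Q(w)|\le 3$, this leaves freedom.

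\textbf{Case 2: $B$ is a triangle or an odd cycle.} Pick two consecutive non-root vertices $u,u'$ of $B$. (P2) of Corollary~\ref{cor-suff} is violated only if $L^\phi(u)=L^\phi(u')=C_B$ with $|C_B|=2$. Choose a protector $w$ adjacent to $u$ but not $u'$, if one exists. Forbidding at $w$ the at most three colours of $L(u)$ that are not ``absorbed'' by the other $\theta_Q$-neighbours of $u$ forces either a repeated colour or a colour outside $L(u)$, giving (P1) via Observation~\ref{obs-lphi}. When every neighbour of $u$ on $\theta_Q$ is a common neighbour of $u$ and $u'$, the colours removed from the two lists coincide. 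We then need $L(u)\ne L(u')$ restricted appropriately, and we forbid at a common protector the colours of the symmetric difference. Planarity limits how many common neighbours there can be, and the key counting argument must show at most three colours are needed.

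The main obstacle is the distribution of protectors. $Q$ has at most two non-protectors (Lemma~\ref{lem-protector}), and these, or $v^*$ in the infinite face (Proposition~\ref{prop}(4)), may be the only $V_2$-neighbours of a particular leaf. So we must choose the leaf block $B$ (or the pair $u,u'$) whose $\theta_Q$-neighbourhood contains a protector. First we will try a counting argument. If $Q$ has at least two leaf blocks, their $\theta_Q$-neighbourhoods are distinct arcs of the facial cycle $C_Q$ (Proposition~\ref{prop}(4)) by planarity. (P2) of Definition~\ref{def-connected} then guarantees three disjoint paths, which forces enough distinct $\theta_Q$-neighbours so that some leaf sees a protector. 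If $Q$ is a single block (a cycle or $K_1$, $K_2$, $K_3$), the 3-connectivity condition gives at least three $\theta_Q$-neighbours in total, so at least one protector, and we handle it directly. Finally, we set $S_Q(v)=\emptyset$ for all remaining vertices and verify condition~(2) of Definition~\ref{def-sq} leaf by leaf.
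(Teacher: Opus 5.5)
There is a genuine gap. You describe where colours should be forbidden, but you never give a rule that both keeps $|S_Q(w)|\le 3$ and defeats every bad $\phi$. The step you defer (``the key counting argument must show at most three colours are needed'') is the whole content of the lemma.

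The local rules you do state fail. Forbidding colours at a single protector $w$ of $u$ cannot force ``a repeated colour or a colour outside $L(u)$''. The other $\theta_Q$-neighbours of $u$ are unconstrained, and if their lists contain $L(u)$ the adversary simply gives them distinct colours of $L(u)$. The Case~1 rule (forbid the colours of $L(u)$ not in $L(w')$) has no size bound and does not refer to $\phi$ at all.

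What does work, but only in a limited range, is the device of Lemma~\ref{lem-adjacentW}. Put the \emph{same} set $A\subseteq L(u)$ of size $d_Q(u)+|N_{W\setminus F}(u)|+1$ on \emph{all} protector neighbours of $u$, avoiding the colours to which non-protectors are confined. Then only non-protectors can delete colours of $A$. This requires $d_Q(u)+|N_{W\setminus F}(u)|\le 2$, and the configurations where that fails are exactly the ones your proposal leaves open.

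Moreover, these configurations cannot be handled leaf by leaf. Let $B$ be a $K_2$-leaf whose non-root vertex $x$ is adjacent to $w_1$, $w_2$ and $p\ge 1$ protectors, so $|L(x)|=p+3$. Whatever sets of size at most $3$ you place on those protectors, each still has at least $p$ colours of $L(x)$ available. So when $L(x)\subseteq L(u)$ for all $u\in N_{\theta_Q}(x)$, Hall's theorem lets the adversary colour $N_{\theta_Q}(x)$ with distinct colours of $L(x)$, leaving $|L^\phi(x)|=1$. Since $B$ has no second non-root vertex, (P2) is unavailable, and whether $L^\phi$ is bad is decided by the rest of $Q$.

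The paper therefore argues by contradiction. Using confined colours and Lemmas~\ref{lem-A}, \ref{lem-oddcycle} and \ref{w1,w2}, it shows that a component $Q$ with no valid assignment must be a chain of blocks $B_1,\dots,B_k$. Its end blocks are $K_2$ or $K_3$, and both meet $w_1$ and $w_2$, so the leaf neighbourhoods overlap rather than forming separate arcs of $C_Q$. It then makes a \emph{global} choice:
\begin{itemize}
\item $\{a_1,a_2,a_3\}$ on the protectors of $B_1$;
\item on the protectors of $x_k$, either $\{b\}$ or a $3$-set avoiding $b$, according to the parity of $k-l$, where $l$ is the last block containing a vertex whose list misses $b$.
\end{itemize}
Lemma~\ref{lem-Gallai} then forces $b$ into $C_{l+1},C_{l+3},\dots$, which contradicts badness at $x_k$. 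Your plan has nothing playing the role of this argument across the whole of $Q$, so it does not establish the lemma.
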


We leave the proof of Lemma \ref{lem-key} to the next section. Now we use this lemma to prove Theorem \ref{thm-main2}. For each connected component $Q$ of $G[V_1]$, let $S_Q$ be a valid assignment for $\theta_Q$.
For each vertex $v$ of $V_2$, let $$L'(v)=L(v)- \cup\{ S_{Q}(v): v \text{ is a protector of } Q\}.$$ 
For every vertex $v\in V_2-\{v^*\}$, since $v$ is a protector of at most two connected components of $G[V_1]$, $|L'(v)| \ge 11-6 = 5$. For the vertex $v^*$, either $|L'(v^*)|=|L(v^*)|= 1$ or $|L(v^*)| =11$ and $|L'(v^*)|\ge |L(v^*)|-3=8$. As $G[V_2]$ is a planar graph, by the classical result of Thomassen on list colouring of planar graphs, $G[V_2]$ has a proper $L'$-colouring. By the definition of valid assignment, $\phi$ can be extended to a proper $L$-colouring of $G$.

\section{Proof of  Lemma \ref{lem-key}} 

 Assume to the contrary that Lemma~\ref{lem-key} is false, and $Q$ is a connected component of $G[V_1]$ which has no valid assignment.

\begin{definition}
    \label{def-forced}
Assume $w$ is a vertex in $V(\theta_Q)$. We say $w$  is {\em confined   to colour $c \in L(w)$} if  for any proper $L$-colouring $\phi$ of $\theta_Q$ with $\phi(w) \ne c$, there is an $L^{\phi}$-colouring of $Q$. 
\end{definition}

\begin{lem}
    \label{lem-forcedc}
    Assume $w \in V(\theta_Q)$ is confined  to colour $c \in L(w)$. Then the following hold:
    \begin{enumerate}
        \item For any $v \in N_{Q}(w) $, $c \in L(v)$.
        \item $w$ is   a non-protector of $Q$.
    \end{enumerate}
\end{lem}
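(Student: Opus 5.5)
Both parts are proved by contradiction, using two facts: $Q$ has no valid assignment (our standing assumption), and Corollary~\ref{cor-suff}.

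\emph{Part (1).} Let $v\in N_Q(w)$ and suppose $c\notin L(v)$. Take any proper $L$-colouring $\phi$ of $\theta_Q$.
\begin{itemize}
\item If $\phi(w)\neq c$, then $Q$ is $L^\phi$-colourable by the definition of $w$ being confined to $c$.
\item If $\phi(w)=c$, then $w\in N_{V_2}(v)$ and $\phi(w)\notin L(v)$. By the ``moreover'' part of Observation~\ref{obs-lphi}, $|L^\phi(v)|>d_Q(v)$, since $G-V(\theta_Q)$ restricted to the component gives $d_Q(v)$. Corollary~\ref{cor-suff}(P1) then shows $L^\phi$ is not bad, so $Q$ is again $L^\phi$-colourable.
\end{itemize}
So every proper $L$-colouring of $V(\theta_Q)$ extends to $Q$. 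Hence the empty assignment $S_Q\equiv\emptyset$ satisfies both conditions of Definition~\ref{def-sq}, so it is a valid assignment for $\theta_Q$. This contradicts the choice of $Q$.

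One technical point is that Definition~\ref{def-forced} speaks of colourings of $\theta_Q$, while Definition~\ref{def-sq} speaks of colourings of $V(\theta_Q)$. I would read both as proper $L$-colourings of $G[V(\theta_Q)]$, and note that $L^\phi$ on $Q$ depends only on colours of $V_2$-neighbours of $Q$, all of which lie on $\theta_Q$.

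\emph{Part (2).} Suppose $w$ is a protector of $Q$. Define $S_Q(w)=\{c\}$ and $S_Q(u)=\emptyset$ for every other $u\in V(\theta_Q)$. Condition~1 of Definition~\ref{def-sq} holds because $|S_Q(w)|=1\le 3$ and $w$ is a protector, while non-protectors receive the empty set. For condition~2, any $\phi$ avoiding $S_Q$ has $\phi(w)\neq c$, so $Q$ is $L^\phi$-colourable because $w$ is confined to $c$. Thus $S_Q$ is a valid assignment, again a contradiction. Therefore $w$ is a non-protector.

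The only step needing care is the degree bookkeeping in (1): we need $|L^\phi(v)|\ge d_Q(v)+1$. Observation~\ref{obs-lphi} applies with $X=V(G)-V(Q)$, since $v\in V_1$ has $|L(v)|=d_G(v)$ and all neighbours of $v$ outside $Q$ lie in $V_2\cap V(\theta_Q)$. To make this fit, I would first extend $\phi$ from $\theta_Q$ to an arbitrary $L$-colouring of the rest of $G-Q$, or simply argue directly on $N_G(v)\subseteq V(Q)\cup V(\theta_Q)$. Everything else is immediate.
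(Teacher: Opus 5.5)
Your proof is correct and is essentially the paper's argument. In (1) you show the empty assignment is valid, using confinement when $\phi(w)\neq c$ and Observation~\ref{obs-lphi} with Corollary~\ref{cor-suff}(P1) when $\phi(w)=c$; in (2) you take $S_Q(w)=\{c\}$, exactly as the paper does. For the degree count, apply Observation~\ref{obs-lphi} directly with $X=V(\theta_Q)$: since $N_G(v)\subseteq V(Q)\cup V(\theta_Q)$, you get $d_{G-X}(v)=d_Q(v)$, whereas extending $\phi$ to all of $G-Q$ is unnecessary and need not be possible.
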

\begin{proof}
    (1) If $v \in N_{Q}(w) $ and $c \notin L(v)$, then  for any proper $L$-colouring $\phi$ of $\theta_Q$, either $\phi(w) \ne c$, or 
    $ \phi(w) = c \notin L(v)$ and hence $|L^{\phi}(v)| > d_{Q}(v)$. 
    By Corollary \ref{cor-suff}, there is an $L^\phi $-colouring of $Q$. So $S_Q(u)=\emptyset$ for each vertex $u \in V(\theta_Q)$ is a valid assignment for $\theta_Q$,  a contradiction.   

    (2)  If $w$ is a protector of $Q$ and is confined to colour $c$, then let $S_Q(w)=\{c\}$ and let $S_Q(u)=\emptyset$ for every other vertex $u\in V(\theta_Q)$. If $\phi$ is an $L$-colouring of $V(\theta)$ such that $\phi(u) \notin S_Q(u)$ for each vertex $u$, then by \Cref{def-forced}, there is an $L^\phi $-colouring of $Q$. So $S_Q$ is a valid assignment for $\theta_Q$,  a contradiction.
\end{proof}

 Let $W$ be the set of non-protectors of $Q$ in $V(\theta_Q)$,  and let $$F = \{w \in W: w\text{ is confined to some colour $c$}  \}.$$
 If $w$ is confined to colour $c$, then in the process of finding a valid assignment for $\theta_Q$,  we may treat $w$ is coloured by colour $c$.

\begin{lem}\label{lem-adjacentW}
 Let $B$ be a leaf block of $Q$ and $v \in U_{B}$. Then $v$ is adjacent to a non-protector of $Q$.
 Moreover, if $v$ is adjacent to a protector of $Q$, then
 $d_Q(v)+|N_{W\backslash F}(v)|\ge 3$.
  \end{lem}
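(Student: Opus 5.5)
Both parts will be proved by contradiction. In each case I will exhibit an explicit valid assignment $S_Q$ for $\theta_Q$, contradicting the standing assumption that $Q$ has none. The common tool is Corollary~\ref{cor-suff}(P1). If some vertex $x\in V(Q)$ ends up with $|L^\phi(x)|>d_Q(x)$, then $L^\phi$ is not bad. By Observation~\ref{obs-lphi}, this happens as soon as some $V_2$-neighbour of $x$ gets a colour outside $L(x)$, or two $V_2$-neighbours of $x$ get the same colour. The vertex $v\in U_B$ is a non-cut vertex of $Q$, so Proposition~\ref{prop}(3) gives $d_Q(v)\le 2$, $v$ adjacent to some vertex of $V_2$, and $|L(v)|=d_G(v)=d_Q(v)+|N_{V_2}(v)|$. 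The plan is to use the protectors among $N_{V_2}(v)$ to forbid colours so that their neighbourhood contributions already overflow $L(v)$.

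\textbf{First part.} Suppose every $V_2$-neighbour of $v$ is a protector of $Q$. By (P1) of Definition~\ref{def-connected} and Proposition~\ref{prop}(4), $N_{V_2}(v)\subseteq V(\theta_Q)$. Let $p_1,\dots,p_t$ be these neighbours, $t\ge 1$.

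If $t\ge 2$ (or in general), I choose $S_Q(p_i)$ of size at most $3$ so that for every admissible $\phi$, the colours of $p_1,\dots,p_t$ cannot use $t$ distinct colours of $L(v)$ in a way that leaves exactly $L^\phi(v)$ of size $d_Q(v)$ matching the Gallai structure of Lemma~\ref{lem-Gallai}.

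The simplest version uses the block structure directly. Let $v'$ be another non-root vertex of $B$, which exists since $|B|\ge 2$ when $Q$ is not a single vertex. Lemma~\ref{lem-Gallai} forces $L^\phi(v)=L^\phi(v')=C_B$ when $L^\phi$ is bad. So it suffices to make $L^\phi(v)\neq L^\phi(v')$ or to create an overflow.

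Pick a protector $p$ of $v$ and set $S_Q(p)$ to be a set of at most $3$ colours from $L(v)$, e.g.\ $d_Q(v)\le 2$ suitable colours plus one more. Then $\phi(p)$ either lies outside $L(v)$, giving (P1), or lies in $L(v)\setminus S_Q(p)$. Every colour not in $S_Q(p)$ that $p$ removes from $L(v)$ must then also be absent from $C_B$. I would arrange $S_Q(p)$ so that the remaining $|L(v)|-1$ colours are too few to contain $C_B$ together with the colours removed by the other neighbours.

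The degenerate case $Q=\{v\}$ means $d_Q(v)=0$ and badness means $L^\phi(v)=\emptyset$. Here we need $\phi(N_{V_2}(v))=L(v)$ with all colours distinct, and one forbidden colour at a protector suffices to prevent this unless $|L(v)|$ is small. I expect the bookkeeping to show that $|S_Q(p)|\le 3$ always suffices because $d_Q(v)\le 2$.

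\textbf{Moreover part.} Suppose $v$ is adjacent to a protector $p$ and $d_Q(v)+|N_{W\setminus F}(v)|\le 2$. The neighbours of $v$ in $F$ behave as if precoloured by their confined colours $c_w$. By Lemma~\ref{lem-forcedc}(1) each $c_w\in L(v)$, and distinct confined neighbours with equal colours would give (P1). So effectively $v$ has at most $d_Q(v)+|N_{W\setminus F}(v)|\le 2$ "free" slots from $Q$ and the unconfined non-protectors. Its list after the confined colours are removed has size $d_Q(v)+|N_{W\setminus F}(v)|+(\#\text{protector neighbours})$.

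I then set $S_Q(p)$ to be up to $3$ colours chosen to cover the possible values of $C_B$ restricted to $v$, i.e.\ the colours of $L(v)$ not forced out. Since at most $2$ slots are free, at most $3$ colours are needed, so that $\phi(p)\notin S_Q(p)$ forces either an overflow or $L^\phi(v)\neq L^\phi(v')$. Whenever $\phi$ violates a confinement, the definition of confinement already supplies an $L^\phi$-colouring of $Q$, so only the $\phi$ respecting all confinements need checking.

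\textbf{Main obstacle.} The hard step is this counting. I must pin down a fixed $S_Q(p)$ with $|S_Q(p)|\le 3$, independent of $\phi$, that defeats every bad configuration. The free neighbours in $W\setminus F$ can take arbitrary colours, so the set of possible $C_B$ varies with $\phi$. I would first try $S_Q(p)=L(v)\setminus\{c_w:w\in N_F(v)\}\setminus\{\text{colours of other protectors' forced sets}\}$, restricted to a $3$-subset. From there I would argue, using Lemma~\ref{lem-Gallai} on $B$, that a bad $\phi$ must give $p$ a colour from this small residual set.
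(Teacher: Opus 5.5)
\textbf{There is a genuine gap.} The whole content of this lemma is the explicit construction of $S_Q$. Your proposal never fixes one; you flag this yourself as the ``main obstacle''.

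The concrete version you do commit to cannot work. Forbidding colours at a single protector $p$ leaves every other protector neighbour of $v$ with $S_Q=\emptyset$, so each of them can delete a fresh colour of $L(v)$. For example, take $d_Q(v)=2$, $N_{V_2}(v)=\{p_1,p_2\}$ with both protectors, $L(v)=\{1,2,3,4\}$ and $S_Q(p_1)=\{1,2,3\}$. Then $\phi(p_1)=4$, $\phi(p_2)=1$ leaves $|L^\phi(v)|=2=d_Q(v)$, so your assignment guarantees nothing.

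Your degenerate case $Q=\{v\}$ fails for the same reason. With three protector neighbours, $L(v)=\{1,2,3\}$ and one colour forbidden at one of them, the other two can still exhaust the list.

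The detour through a second non-root vertex $v'$ and (P2) does not rescue this. Moreover, $v'$ need not exist: if $B\cong K_2$ contains a cut vertex of $Q$, then $|U_B|=1$, so $|B|\ge 2$ does not give you $|U_B|\ge 2$.

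\textbf{The missing idea.} You need one fixed set $A$, placed on \emph{every} protector neighbour of $v$, with two properties:
\begin{itemize}
\item[(i)] $A$ is disjoint from the confined colours $C_{F'}=\{c_w: w\in N_F(v)\}$;
\item[(ii)] $|A| = t = d_Q(v)+|N_{W\setminus F}(v)|+1$, which is at most $3$ under the contrary assumption.
\end{itemize}
Such an $A$ exists because
\[
|L(v)\setminus C_{F'}|\ \ge\ d_G(v)-|N_F(v)| \;=\; d_Q(v)+|N_{W\setminus F}(v)|+(\text{number of protector neighbours of } v)\ \ge\ t .
\]

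Now take an admissible $\phi$. Either some $w\in N_F(v)$ has $\phi(w)\ne c_w$, and confinement finishes it, as you correctly note. Or the only neighbours of $v$ that can remove colours of $A$ are those in $N_{W\setminus F}(v)$. In that case
\[
|L^\phi(v)|\ \ge\ t-|N_{W\setminus F}(v)| \;=\; d_Q(v)+1 \;>\; d_Q(v),
\]
and (P1) of Corollary~\ref{cor-suff} applies at $v$ itself. No Gallai structure and no second vertex are needed.

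Finally, the first assertion needs no separate argument. By Proposition~\ref{prop}(3), $v$ has a $V_2$-neighbour. If $v$ has no protector neighbour, that neighbour is a non-protector. If $v$ does have a protector neighbour, the inequality together with $d_Q(v)\le 2$ gives $N_{W\setminus F}(v)\neq\emptyset$.
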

\begin{proof}
By (3) of Proposition \ref{prop}, every vertex $v \in U_B$ is adjacent to some vertex in $V_2$. Assume $v$ is adjacent to a protector of $Q$. It suffices to prove that $d_Q(v)+|N_{W\backslash F}(v)|\ge 3$, which implies that $v$ is also adjacent to a non-protector (as $d_Q(v) \le 2$ by (3) of Proposition \ref{prop}).
 Assume to the contrary that $d_Q(v)+|N_{W\backslash F}(v)|\le 2$. Let $F'=N_F(v)$. 
For $w\in F'$, let $c_w$ be the colour such that $w$ is confined to colour $c_w$.  By Lemma \ref{lem-forcedc}, we have $$C_{F'}=\{c_w: w \in F'   \} \subseteq L(v).$$ 
Since $v$ is adjacent to some protector of $Q$, we conclude that  
$$|L(v)- C_{F'}|\geq d_G(v)-|N_{F'}(v)|\ge d_Q(v)+|N_{W\backslash F}(v)|+1.$$
Let $t=d_Q(v)+|N_{W\backslash F}(v)|+1$. As $d_Q(v)+|N_{W\backslash F}(v)|\le 2$, we have $t\leq 3$. Let $A$ be a $t$-subset of $L(v)\backslash C_{F'}$.
Let $S_Q(u) = A$ for each protector $u\in N_{\theta_Q}(v)$ of $Q$ and let $S_Q(u) = \emptyset$ for every other vertex $u\in V(\theta_Q)$.

For  any proper $L$-colouring $\phi$ of $\theta_Q$ with  $\phi(u) \notin S_Q(u)$, we have either $\phi(w)\ne c_w$ for some $w\in F'$ or $|L^\phi(v)|\ge t-|N_{W\backslash F}(v)|>d_Q(v)$. 
By Corollary \ref{cor-suff}, there is an $L^\phi$-colouring of $Q$. So $S_Q$ is a valid assignment for $Q$,  a contradiction.
\end{proof}

\begin{cor}
  If $B$ is a leaf block of $Q$ and  $v \in U_{B}$ is   adjacent to a protector of $Q$, then either $B$ is a copy of $K_2$, $F=\emptyset$ and $|W|=2$, and $v$ is adjacent to both vertices in $W$, or 
     $B$ is an odd cycle, and $v$ has a neighbor in $W \backslash F$.
     Consequently, $|V_2| \ge 2$.
\end{cor}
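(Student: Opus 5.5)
The plan is to combine Lemma \ref{lem-adjacentW} with the block structure from Proposition \ref{prop}(2),(3) and the fact that $Q$ has at most two non-protectors, so $|W|\le 2$. Let $B$ be a leaf block and $v\in U_B$ adjacent to a protector. Lemma \ref{lem-adjacentW} gives
\[
d_Q(v)+|N_{W\backslash F}(v)|\ge 3.
\]
By Proposition \ref{prop}(2),(3), $d_Q(v)\le 2$, and $B$ is $K_1$, $K_2$, $K_3$ or an odd cycle; a triangle counts as an odd cycle.

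First dispose of $B=K_1$. This means $Q$ is a single vertex, so $d_Q(v)=0$, and the inequality would force $|N_{W\backslash F}(v)|\ge 3$, contradicting $|W|\le 2$.

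Next take $B=K_2$. Since $v$ is a non-root vertex of a leaf block, $d_Q(v)=1$. Then $|N_{W\backslash F}(v)|\ge 2$, and because $|W|\le 2$ this forces $|W|=2$, $W\cap F=\emptyset$ restricted to $N(v)$, and $v$ adjacent to both vertices of $W$. To get $F=\emptyset$ as stated, note that $W\backslash F$ has at least two elements and $W$ has at most two, so $W\backslash F=W$, that is, $F=\emptyset$.

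If $B$ is an odd cycle (including $K_3$), then $d_Q(v)=2$ and the inequality gives $|N_{W\backslash F}(v)|\ge 1$. So $v$ has a neighbour in $W\backslash F$.

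Finally, for $|V_2|\ge 2$: suppose $V_2=\{v^*\}$. By Lemma \ref{lem-protector}, $H$ is the single edge $v^*\theta^*$, so $v^*$ is the protector of the unique component $Q$ and $W=\emptyset$. By Lemma \ref{lem-adjacentW}, every vertex of $U_B$ must be adjacent to a non-protector. But Proposition \ref{prop}(3) says each such $v$ is adjacent to some vertex of $V_2=\{v^*\}$, which is a protector, so $v$ is adjacent to a protector and hence, by the lemma, also to a non-protector. This is impossible since $W=\emptyset$.

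The only delicate point is keeping the correct $d_Q(v)$ for non-root vertices, especially in the case $Q=K_2$, where both vertices are non-root; this is routine.
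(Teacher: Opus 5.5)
Your proposal is correct and follows the paper's proof: you use the same case split on $B\in\{K_1,K_2,\text{odd cycle}\}$, driven by $d_Q(v)+|N_{W\backslash F}(v)|\ge 3$ from Lemma~\ref{lem-adjacentW} together with $|W|\le 2$, and the same observation that $V_2=\{v^*\}$ leaves $Q$ with no non-protector. Invoking Proposition~\ref{prop}(3) to show that the hypothesis (some $v\in U_B$ adjacent to a protector) holds automatically when $V_2=\{v^*\}$ is only a small refinement of the paper's one-line deduction.
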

\begin{proof}
    By (2) of Proposition \ref{prop}, $B$ is a complete graph of order at most 3 or an odd cycle.
    If $B=K_1$, then $d_Q(v)=0$ and hence $d_Q(v)+|N_{W\backslash F}(v)|\le 2$, a contradiction.
    If $B=K_2$, then $d_Q(v)=1$, and hence $|N_{W\backslash F}(v)| = |W|=2$.  
    If $B$ is an odd cycle, then $d_Q(v)=2$, and hence $|N_{W\backslash F}(v)|\ge 1$. 
    As a consequence, $Q$ has a non-protector. But if  $|V_2|=1$, then $G[V_1]$ has a single component $Q$, which has no non-protector. So $|V_2|\ge 2$.
\end{proof}

\begin{lem}\label{lem-finite}
$\theta_Q$ is a finite face of $G[V_2]$.
\end{lem}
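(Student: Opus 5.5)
The plan is to argue by contradiction: suppose $\theta_Q=\theta^*$ is the infinite face of $G[V_2]$. I would first pin down the non-protectors of $Q$, then show that the unique non-protector must be the special vertex $v^*$. Finally I would show that $v^*$ is automatically confined, which contradicts Lemma~\ref{lem-adjacentW}.

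First, by the Corollary following Lemma~\ref{lem-adjacentW} we have $|V_2|\ge 2$. Since $G[V_2]$ is connected (Proposition~\ref{prop}), $v^*$ is not an isolated vertex of $G[V_2]$, so $f(v^*)=1$ and $|L(v^*)|=1$; say $L(v^*)=\{c\}$. By Lemma~\ref{lem-protector} with $|V_2|\ge 2$, we have $d_H(v^*)=0$ and $d_H(\theta^*)=d_\Theta(\theta^*)-1$. Hence $v^*$ is a non-protector of $Q$ and every other vertex of $V(\theta^*)$ is a protector, i.e.\ $W=\{v^*\}$.

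Next, I observe that $v^*$ is confined to colour $c$, vacuously. Every proper $L$-colouring $\phi$ of $\theta_Q$ has $\phi(v^*)=c$, so the condition of Definition~\ref{def-forced} quantifies over an empty set of colourings. Therefore $v^*\in F$, and $W\setminus F=\emptyset$.

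Now take any leaf block $B$ of $Q$ and a vertex $v\in U_B$; such a vertex exists since every Gallai tree, even $K_1$, has a leaf block with a non-root vertex. By Proposition~\ref{prop}(3), $v$ has a neighbour in $V_2$, hence in $V(\theta_Q)$. By Proposition~\ref{prop}(4), $N_{\theta_Q}(v)\ne\{v^*\}$, so $v$ is adjacent to some vertex of $V(\theta_Q)$ other than $v^*$, which is a protector of $Q$. Lemma~\ref{lem-adjacentW} then gives $d_Q(v)+|N_{W\setminus F}(v)|\ge 3$. But $d_Q(v)\le 2$ by Proposition~\ref{prop}(3) and $W\setminus F=\emptyset$, so the left side is at most $2$, a contradiction. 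Hence $\theta_Q$ is finite.

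The only delicate point is the vacuous confinement of $v^*$. Definition~\ref{def-forced} only requires $c\in L(w)$ together with the implication over colourings with $\phi(w)\neq c$, and none exist, so I expect this to go through. Alternatively, one can use the Corollary after Lemma~\ref{lem-adjacentW} directly: the $K_2$ case needs $|W|=2$, which fails, and the odd-cycle case needs a neighbour in $W\setminus F=\emptyset$, which also fails.
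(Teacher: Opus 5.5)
Your proof is correct and is essentially the paper's argument. The paper directly picks $A\subseteq L(v)\setminus\{c\}$ with $|A|=d_Q(v)+1$ and sets $S_Q(u)=A$ on the protectors adjacent to $v$. This is exactly the assignment the proof of Lemma~\ref{lem-adjacentW} builds once one notes, as you do, that $v^*$ is vacuously confined to $c$ (so $F'=\{v^*\}$ and $C_{F'}=\{c\}$). Your packaging also applies to any $v\in U_B$, not only one adjacent to $v^*$ (the paper does not justify that such a $v$ exists), and it states explicitly that $|V_2|\ge 2$ is used to get $W=\{v^*\}$.
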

\begin{proof}
 Assume $\theta_Q$ is the infinite face of $G[V_2]$ and $v^*\in  V(\theta_Q)$. Thus, each vertex in $V(\theta_Q)-\{v^*\}$ is a protector of $Q$. Assume $B$ is a leaf block of $Q$ and $v\in U_B$ adjacent to $v^*$.  By  (4) and (4) of \Cref{prop}, we know that $N_{\theta_Q}(v)\not=\{v^*\}$, and hence $v$ is adjacent to some protector of $Q$. Assume $L(v^*) = \{c\}$. As $|L(v)|=d_G(v) \ge d_Q(v)+2$,  there is a  subset $A$ of $L(v) - \{c\}$ of size $d_Q(v)+1 \le 3$. Let $S_Q(u) =A$ for each protector $u\in N_{\theta_Q}(v)$ of $Q$ and $S_Q(u) = \emptyset$ for every other vertex $u\in V(\theta_Q)$. If $\phi$ is an $L$-colouring of $V(\theta_Q)$ such that $\phi(u) \notin S_Q(u)$ for each vertex $u$, then $|L^{\phi}(v)| > d_Q(v)$. Hence $L^{\phi}$ is not a bad assignment for $Q$. So  $S_Q$ is a valid assignment for $\theta_Q$, 
  a contradiction.
 \end{proof}

By  (4) of \Cref{prop} and Lemma \ref{lem-finite}, we conclude that $\theta_Q$ is a cycle. Without loss generality, we may assume that $W=\{w_1, w_2\}$ is the set of non-protectors of $Q$. Since $|V(\theta_Q)|\ge 3$, there is at least one protector of $Q$ which is adjacent to a vertex in $Q$ by \Cref{def-connected}.

\begin{lem}
     \label{lem-A}
     Assume that  $v_1, v_2\in U_{B}$ for a leaf block $B$ of $Q$. 
If  $N_{\theta_Q}(v_1) \subsetneq N_{\theta_Q}(v_2)$, then $N_{\theta_Q}(v_2) - N_{\theta_Q}(v_1) = \{w_i\}$ for some $i \in \{1,2\}$, and $w_i\in F$.
 \end{lem}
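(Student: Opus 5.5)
We argue by contradiction, in the same style as Lemma \ref{lem-adjacentW}: if the conclusion fails, we build a valid assignment $S_Q$ for $\theta_Q$. Since $v_1,v_2\in U_B$ are non-cut vertices of the same block $B$, Corollary \ref{cor-suff}(P2) applies: it suffices that every $L$-colouring $\phi$ of $V(\theta_Q)$ avoiding $S_Q$ gives $L^\phi(v_1)\ne L^\phi(v_2)$, or $|L^\phi(v_i)|>d_Q(v_i)$ for some $i$. Let $D=N_{\theta_Q}(v_2)-N_{\theta_Q}(v_1)\neq\emptyset$. By Proposition \ref{prop}(3), $d_Q(v_1)=d_Q(v_2)=:r\le 2$.

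We first bound $D$ by counting. Suppose $|D|\ge 2$, or $D$ contains a protector $u$. Put $S_Q(u)=\{c\}$ on protectors $u\in D$, with $c$ chosen as follows, and $S_Q=\emptyset$ elsewhere. If some colour of $L(v_2)$ is missing from $L(v_1)$ (or vice versa), colour choices on common neighbours act equally on both lists. So the difference survives unless it is removed by colours on $D$, and we forbid that colour on one protector of $D$. More robustly, take $\phi$ avoiding $S_Q$ and suppose $L^\phi(v_1)=L^\phi(v_2)$ with both of size exactly $r$. Then the colours of $N_{\theta_Q}(v_1)$ are distinct and lie in $L(v_1)$, and the same holds for $v_2$. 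Because $N(v_1)\subsetneq N(v_2)$, the colours of the extra vertices in $D$ lie in $L(v_2)-L^\phi(v_2)$, and they must make $L(v_2)$ minus all neighbour colours equal to $L(v_1)$ minus fewer colours. This forces $L(v_2)=L(v_1)\cup\phi(D)$ with $\phi(D)\cap L(v_1)=\emptyset$, where $\phi(D)$ is a set of $|D|$ distinct colours. Hence every bad $\phi$ colours $D$ with the fixed set $L(v_2)-L(v_1)$. If $D$ contains a protector $u$, we forbid on $u$ that set, which has size $|D|$, and this size must be at most $3$. If $|D|$ is too large, we instead forbid one colour of $L(v_2)-L(v_1)$ on $u$ and argue again. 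Thus a protector in $D$ yields a valid assignment, a contradiction. So $D\subseteq W$.

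Now suppose $D=\{w_1,w_2\}$, or $D=\{w_i\}$ with $w_i\notin F$. In the first case, every bad $\phi$ colours $w_1,w_2$ inside the fixed $2$-set $L(v_2)-L(v_1)$. In the second case, $w_i$ must receive the unique colour $c$ of $L(v_2)-L(v_1)$. That means $w_i$ is confined to $c$, i.e. $w_i\in F$, which is a contradiction. So $|D|=1$ and $w_i\in F$, as claimed.

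It remains to rule out $D=W$ with $|D|=2$. Here we use Lemma \ref{lem-adjacentW}. Since $v_1$ has no neighbour in $W$, $v_1$ is adjacent only to protectors. Then $d_Q(v_1)+|N_{W\setminus F}(v_1)|=r\le 2<3$, contradicting Lemma \ref{lem-adjacentW}, since $v_1$ must be adjacent to a non-protector. The main obstacle is making the "bad $\phi$ forces $\phi(D)=L(v_2)-L(v_1)$" step rigorous, particularly the equality-of-sizes bookkeeping via Observation \ref{obs-lphi}, and keeping the forbidden sets at size at most $3$.
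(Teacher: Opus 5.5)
Your key identity is correct, and it is already rigorous as you wrote it. If $L^\phi$ is bad, then $|L^\phi(v_j)|=d_Q(v_j)$ forces $\phi$ to be injective on $N_{\theta_Q}(v_2)$ with $\phi(N_{\theta_Q}(v_j))\subseteq L(v_j)$. Then $L^\phi(v_1)=L^\phi(v_2)$ gives $L(v_2)=L(v_1)\sqcup\phi(D)$. If $L(v_1)\not\subseteq L(v_2)$, no bad $\phi$ exists and $S_Q\equiv\emptyset$ is already valid, so you may assume $M:=L(v_2)-L(v_1)$ has exactly $|D|$ elements. Your treatment of $D=\{w_i\}$ (confinement to the unique colour of $M$) and of $D=W$ (contradicting Lemma~\ref{lem-adjacentW} at $v_1$) is fine.

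The gap is the case where $D$ contains a protector. Putting all of $M$ on one protector $u\in D$ only respects the size bound when $|D|\le 3$. Your fallback for larger $|D|$ (forbid one colour $a\in M$ on $u$ and ``argue again'') fails: a bad $\phi$ can put $a$ on another vertex of $D$ and a different colour of $M$ on $u$. Forbidding $a$ on \emph{every} protector of $D$ still fails if $D$ also contains a non-protector, which can absorb $a$. The same objection applies to your opening remark about forbidding ``that colour on one protector of $D$''. Nothing in the setting prevents $|D|\ge 4$, so this case is genuinely uncovered. You flagged the size-$3$ constraint as an obstacle, but it is not resolved.

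The repair follows directly from your identity. Choose $M'\subseteq M$ with $|M'|=\min\{|D|,3\}$, set $S_Q(u)=M'$ for every protector $u\in D$, and set $S_Q=\emptyset$ elsewhere. A bad $\phi$ avoiding $S_Q$ is injective on $D$ with $\phi(D)=M\supseteq M'$, so the colours of $M'$ must sit on $|M'|$ distinct non-protectors of $D$. This is impossible when $|D|\le 3$, since then $M'=M$ and $D$ contains a protector. It is also impossible when $|D|\ge 4$, since then $|M'|=3>|W|$.

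Alternatively, as the paper does, apply Lemma~\ref{lem-adjacentW} to $v_1$ first. Then $v_1$ sees some $w_j$, so $D$ contains at most one non-protector. Forbidding one colour of $M$ (if $D$ has no non-protector) or two colours of $M$ (otherwise) on the protectors then suffices. These are exactly the paper's choices $S_Q=\{a\}$ and $S_Q=\{a,b\}$ on the protectors in $N_{\theta_Q}(v_2)$.

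With this repair your proof is complete. The identity $L(v_2)=L(v_1)\sqcup\phi(D)$ gives a more uniform argument than the paper's, which uses only the bound $|L(v_2)-L(v_1)|\ge |A|$ and checks (P1) or (P2) separately in each case.
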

\begin{proof}
Assume to the contrary that $N_{\theta_Q}(v_2) - N_{\theta_Q}(v_1)=A\ne \{w_i\}$ for $i=1,2$. 
By  Lemma \ref{lem-adjacentW},  each of $v_1$ and $v_2$ is adjacent to at least one of $w_1, w_2$. As $N_{\theta_Q}(v_1) \subsetneq N_{\theta_Q}(v_2)$,  $A \ne \emptyset$ contains at most one non-protector of $Q$.
Since $v_1$ and $v_2$ are non-root vertices of a leaf block of $Q$, we have $d_Q(v_1)=d_Q(v_2)$.
Thus, $|L(v_2)-L(v_1)|\ge |L(v_2)|-|L(v_1)|=d_G(v_2)-d_G(v_1)=|N_{\theta_Q}(v_2)|-|N_{\theta_Q}(v_1)|=|A|$.

If  each vertex in $A$ is a protector of $Q$, then let
 $a\in L(v_2)-L(v_1)$ and let $S_Q(u)=\{a\}$  for each protector $u\in N_{\theta_Q}(v_2)$ of $Q$ and  $S_Q(u)=\emptyset$ for every other vertex $u\in V(\theta_Q)$.
Assume $\phi$ is an  $L$-colouring  of $\theta_Q$ such that  $\phi(u) \notin S_Q(u)$ for each vertex $u\in V(\theta_Q)$. Then   
 either $a\in L^\phi(v_2)-L^\phi(v_1)$ and hence $L^\phi(v_1)\ne L^\phi(v_2)$, or there exists a vertex $u'\in N_{\theta_Q}(v_1)\cap N_{\theta_Q}(v_2)$ such that $\phi(u')=a$ and hence $|L^\phi(v_1)|>d_Q(v_1)$.
By Corollary \ref{cor-suff}, there is  an $L^\phi$-colouring of $Q$. So $S_Q$ is a valid assignment for $\theta_Q$, a contradiction.

Assume exactly one of $w_1$ and $w_2$ is contained in $A$, say $ A \cap W =\{w_1\}$. By  Lemma \ref{lem-adjacentW}, each of $v_1$ and $v_2$ is adjacent to at least one of $w_1, w_2$. Since $w_1\notin N_{\theta_Q}(v_1)$, we know that $w_2\in N_{\theta_Q}(v_1)$. As $N_{\theta_Q}(v_1) \subsetneq N_{\theta_Q}(v_2)$,   $w_2\in N_{\theta_Q}(v_1)\cap N_{\theta_Q}(v_2)$.  
 Assume $A\ne \{w_1\}$. Then  $A$ contains a protector of $Q$. 
Let $\{a,b\}$ be a 2-subset of $L(v_2)-L(v_1)$, and let $S_Q(u) = \{a,b\}$ for each protector $u\in N_{\theta_Q}(v_2)$ of $Q$ and let $S_Q(u)=\emptyset$ for every other vertex $u\in V(\theta_Q)$.

Assume $\phi$ is an $L$-colouring  of $\theta_Q$ such that  $\phi(u) \notin S_Q(u)$ for each vertex $u\in V(\theta_Q)$. Then  
at least one of the following holds:
\begin{itemize}
 
    \item $\phi(w_2)\in \{a,b\}$, which implies $L^\phi(v_1)>d_Q(v_1)$.

    \item $\phi(w_2)\notin \{a,b\}$, which implies $\{a,b\}-\phi(w_1)\subseteq L^\phi(v_2)$ and hence $L^\phi(v_2)\ne L^\phi(v_1)$.
\end{itemize} 
By Corollary \ref{cor-suff}, there is  an $L^\phi$-colouring of $Q$. So  $S_Q$ is a valid assignment for $\theta_Q$, a contradiction.

Assume $A= \{w_1\}$. We shall prove that $w_1 \in F$.

Let  $c\in L(v_2)-L(v_1)$. Assume $\phi$ is a proper $L$-colouring of $Q$ with  $\phi(w_1) \neq c$. If $c \in L^{\phi}(v_2)$, then $L^{\phi}(v_1) \ne L^{\phi}(v_2)$. If $c \notin L^{\phi}(v_2)$, then a common neighbour of $v_1$ and $v_2$ is coloured by $c$, and hence $|L^{\phi}(v_1)| > d_{Q}(v_1)$. In both cases, it follows Corollary \ref{cor-suff} that there is an $L^\phi$-colouring of $Q$. Therefore, $w_1$ is confined to the colour $c$, and hence $w_1\in F$.
\end{proof}

\begin{lem}\label{lem-oddcycle}
Assume  a leaf block $B$ of $Q$ is an odd cycle, 
 $v_B \in U_B$ is adjacent to some protector of $Q$. Then for any vertex $v \in U_B\backslash \{v_B\}$,  the following hold:
\begin{enumerate}
\item If $N_{W}(v)\subseteq N_{W}(v_B)$, then $v$ is adjacent to some protector of $Q$.

\item If $N_{W}(v_B)=N_{W}(v)=\{w_i\}$ for some $i=1, 2$, then $N_{\theta_Q}(v_B)\cap N_{\theta_Q}(v)$ contains a protector of $Q$.

\item If $N_{W}(v_B)\cap N_{W}(v)=\emptyset $, then $|L(v_B)\cap L(v)|\ge 3$.

\end{enumerate}
\end{lem}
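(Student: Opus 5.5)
The plan is to argue each item by contradiction. In each case we exhibit a valid assignment $S_Q$ for $\theta_Q$, contradicting the choice of $Q$. Throughout, $B$ is an odd cycle, so $d_Q(v_B)=d_Q(v)=2$. Hence $|L(v_B)|=2+|N_{\theta_Q}(v_B)|$ and $|L(v)|=2+|N_{\theta_Q}(v)|$. By Lemma \ref{lem-adjacentW} every vertex of $U_B$ has a neighbour in $W=\{w_1,w_2\}$. Since $d_Q(v_B)=2$, the second part of Lemma \ref{lem-adjacentW} also gives that $v_B$ has a neighbour in $W\setminus F$.

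\emph{Item (1).} Suppose $v$ has no protector neighbour. Then $N_{\theta_Q}(v)=N_W(v)\subseteq N_W(v_B)\subseteq N_{\theta_Q}(v_B)$. The inclusion is strict, because $v_B$ has a protector neighbour and $v$ does not. Lemma \ref{lem-A} then says that $N_{\theta_Q}(v_B)-N_{\theta_Q}(v)$ is a single non-protector $w_i$. But this difference contains a protector, which is a contradiction. This item should be immediate.

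\emph{Item (3).} Suppose $|L(v_B)\cap L(v)|\le 2$. Since $N_W(v_B)$ and $N_W(v)$ are disjoint and each is nonempty, $v_B$ has exactly one non-protector neighbour $w$. We have $|L(v_B)-L(v)|\ge |L(v_B)|-2=|N_{\theta_Q}(v_B)|\ge 2$, so we choose distinct $a,b\in L(v_B)-L(v)$. Put $S_Q(u)=\{a,b\}$ for every protector $u\in N_{\theta_Q}(v_B)$ and $S_Q(u)=\emptyset$ elsewhere. For any admissible $\phi$, only $w$ can remove $a$ or $b$ from $L(v_B)$. So at least one of $a,b$ survives in $L^\phi(v_B)$, and it is not in $L^\phi(v)\subseteq L(v)$. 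Thus (P2) of Corollary \ref{cor-suff} holds, and $S_Q$ is valid.

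\emph{Item (2).} This is the main obstacle. Suppose $N_W(v_B)=N_W(v)=\{w_i\}$ and no common neighbour of $v_B$ and $v$ is a protector. Then $w_i$ is the only common neighbour on $\theta_Q$, and by the remark above $w_i\notin F$. The protector neighbours of $v_B$ and those of $v$ form disjoint sets $P_B$ and $P_v$, with $P_B\neq\emptyset$. The idea is again to force $L^\phi(v_B)\ne L^\phi(v)$ or to create a surplus. I would split according to $|L(v_B)-L(v)|$.

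If $|L(v_B)-L(v)|\ge 2$, the argument of item (3) works verbatim, since $w_i$ is the only non-protector that can delete a colour from $L(v_B)$.

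Otherwise $|L(v_B)\cap L(v)|\ge |L(v_B)|-1\ge 3$. I would then pick $A\subseteq L(v_B)\cap L(v)$ with $|A|=3$ and set $S_Q(u)=A$ on all of $P_B\cup P_v$, possibly restricting only one side. After colouring, at least two colours of $A$ survive in both $L^\phi(v_B)$ and $L^\phi(v)$. If $\phi$ is bad, both lists have size exactly $2$, so both equal $A-\{\phi(w_i)\}$. Then every colour of $L(v_B)-A$ must be killed by distinct neighbours in $P_B\cup\{w_i\}$, and similarly for $v$ with $P_v$. I expect to derive a counting contradiction from this, or else to show that $w_i$ is confined to a colour, contradicting $w_i\notin F$. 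If this choice of $A$ fails, I would instead take $|A|=2$ inside the intersection together with a third colour taken from the private part of one list. Tuning these choices is the delicate step of the proof.
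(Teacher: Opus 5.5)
Your items (1) and (3) are correct and follow the paper's arguments. The gap is item (2), which you yourself identify as the main obstacle and do not complete.

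After your correct reduction to $|L(v_B)\cap L(v)|\ge 3$, you propose placing a single 3-set $A\subseteq L(v_B)\cap L(v)$ on all of $P_B\cup P_v$. This cannot be certified through the pair $v_B,v$. A bad colouring can take $\phi(w_i)\in A$ and use the protectors to delete exactly the colours outside $A$. That leaves $L^\phi(v_B)=L^\phi(v)=A-\{\phi(w_i)\}$, both of size $d_Q=2$.

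For instance, take $L(v_B)=\{1,2,3,4\}$, $L(v)=\{1,2,3,5\}$, $P_B=\{p\}$, $P_v=\{q\}$ and $A=\{1,2,3\}$. The colouring $\phi(w_i)=1$, $\phi(p)=4$, $\phi(q)=5$ respects $S_Q$, yet satisfies neither (P1) nor (P2) at $v_B,v$.

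The counting contradiction you hope for does not exist. Since $P_B\cap P_v=\emptyset$, each colour of $L(v_B)-A$ and of $L(v)-A$ has its own protector available to delete it. Nor is $w_i$ shown to be confined, because every colour of $A$ remains possible for $\phi(w_i)$. Your fallback ("two common colours plus one private colour") is never specified, so item (2) is not established.

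The missing idea is to use the disjointness of $P_B$ and $P_v$ to give the two sides \emph{different} 3-sets:
\begin{itemize}
\item Pick $A\subseteq L(v_B)$ and $A'\subseteq L(v)$ with $|A|=|A'|=3$ and $A\neq A'$. This is possible because $|L(v)|\ge 4$: item (1) applies since $N_W(v)=N_W(v_B)$, so $v$ has a protector neighbour.
\item Set $S_Q=A$ on $P_B$, $S_Q=A'$ on $P_v$, and $\emptyset$ elsewhere. Each protector receives at most three colours, because $N_{\theta_Q}(v_B)\cap N_{\theta_Q}(v)=\{w_i\}$.
\item Suppose $L^\phi$ were bad. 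By (P1) both lists have size $2$. This forces $L^\phi(v_B)=A-\{\phi(w_i)\}$ and $L^\phi(v)=A'-\{\phi(w_i)\}$, with $\phi(w_i)\in A\cap A'$.
\item Two distinct 3-sets stay distinct after deleting a common element, so (P2) of Corollary~\ref{cor-suff} applies.
\end{itemize}
This is the paper's argument, and it also makes your case split on $|L(v_B)-L(v)|$ unnecessary.
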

\begin{proof}
(1) 
If $v$ is not adjacent to any protector of $Q$, then
$N_{\theta_Q}(v)\subsetneq N_{\theta_Q}(v_B)$.
But $N_{\theta_Q}(v_B)-N_{\theta_Q}(v)$ contains at least one  protector of $Q$, in contrary to Lemma \ref{lem-A}.

(2)
It follows from (1) that $v$ is adjacent to at least one protector of $Q$.
Thus, $|L(v_B)|, |L(v)|\ge 4$.
Assume to the contrary that $N_{\theta_Q}(v_B)\cap N_{\theta_Q}(v)$ does not contain any protector of $Q$. 
We may assume that $N_{\theta_Q}(v_B)\cap N_{\theta_Q}(v)=\{w_1\}$.
Let $\{a_1,a_2,a_3\}\subseteq L(v_B)$ and $\{b_1,b_2,b_3\}\subseteq L(v)$, 
where $\{a_1,a_2,a_3\}\ne \{b_1,b_2,b_3\}$. 
 Define
\[
S_Q(u)=
\begin{cases}
\{a_1,a_2,a_3\}, & \text{if } u\in N_{\theta_Q}(v_B)- \{w_1\},\\[4pt]
\{b_1,b_2,b_3\}, & \text{if } u\in N_{\theta_Q}(v)- \{w_1\},\\[4pt]
\emptyset, & \text{otherwise.}
\end{cases}
\]

Since $ N_{\theta_Q}(v_B)\cap N_{\theta_Q}(v)=\{w_1\}$, we have $|S_Q(u)|\le 3$ for each protector $u$ of $Q$.
Assume $\phi$ is an $L$-colouring  of $\theta_Q$ such that  $\phi(u) \notin S_Q(u)$ for each vertex $u\in V(\theta_Q)$. Then  $\{a_1,a_2,a_3\}- \{\phi (w_1)\}\ne \{b_1,b_2,b_3\}- \{\phi (w_1)\}$, and hence $L^\phi(v_B)\neq L^\phi(v)$.
By  (P2) of Corollary \ref{cor-suff}, there is an $L^\phi$-colouring of $Q$. So $S_Q$ is a valid assignment for $\theta_Q$, a contradiction.

(3)  Assume to the contrary that $|L(v_B)\cap L(v)|\le 2$. By  (3) of \Cref{prop} and \Cref{lem-adjacentW}, each of $v_B$ and $v$ is adjacent to some vertex in $W\backslash F$. Without loss of generality, we may assume that $N_{W}(v_B)=\{w_1\}$ and $ N_{W}(v)=\{w_2\}$. Since $|L(v_B)|=d_G(v_B)\ge 4$, we conclude that $|L(v_B)-L(v)|\ge 2$.
Let $\{a,b\}$ be a 2-subset of $ L(v_B)-L(v)$, and let $S_Q(u)=\{a,b\}$ for every protector $u\in N_{\theta_Q}(v_B)$ and let $S_Q(u)=\emptyset$ for every other vertex $u\in V(\theta_Q)\backslash N_{\theta_Q}(v_B)$. Assume $\phi$ is an  $L$-colouring  of $\theta_Q$ such that  $\phi(u) \notin S_Q(u)$ for each vertex $u\in V(\theta_Q)$.
Then $\{a,b\}-\{\phi(w_1)\} \subseteq L^\phi(v_B)-L^\phi(v)$, and hence $L^\phi(v_B)\ne L^\phi(v).$
 By  (P2) of Corollary \ref{cor-suff}, there is an $L^\phi$-colouring of $Q$. So $S_Q$ is a valid assignment for $\theta_Q$, a contradiction. 
\end{proof}

\begin{lem}\label{w1,w2}
If $B$ is a leaf block of $Q$, then $w_1,w_2\in N_{\theta_Q}(U_B)$.
\end{lem}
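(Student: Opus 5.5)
We argue by contradiction. Suppose $B$ is a leaf block of $Q$ and, say, $w_2\notin N_{\theta_Q}(U_B)$. Then $W\cap N_{\theta_Q}(U_B)\subseteq\{w_1\}$. By Lemma \ref{lem-adjacentW}, every vertex of $U_B$ is adjacent to a non-protector, so every $v\in U_B$ is adjacent to $w_1$. The plan is to build a valid assignment $S_Q$ for $\theta_Q$ that uses only the protectors adjacent to $U_B$ (each getting a set of size at most $3$), and to force either (P1) or (P2) of Corollary \ref{cor-suff} on $B$.

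\textbf{Case $B=K_1$ or $K_2$.} If some vertex of $U_B$ is adjacent to a protector, the Corollary after Lemma \ref{lem-adjacentW} forces $B=K_2$ with that vertex adjacent to both $w_1$ and $w_2$, contradicting our assumption. So no vertex of $U_B$ is adjacent to a protector. Then each $v\in U_B$ has $N_{\theta_Q}(v)\subseteq\{w_1\}$, so $d_G(v)\le d_Q(v)+1$. But $v$ is a non-root vertex of a leaf block, and by (P2) of Definition \ref{def-connected} it has three disjoint paths to $C_Q$ inside $C_Q\cup{\rm int}(C_Q)$. For $B=K_1$ or $K_2$ this requires at least $3-d_Q(v)\ge 2$ neighbours on $\theta_Q$, a contradiction. (For $B=K_2$, the root $r$ and $w_1$ would form a 2-cut separating $v$.)

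\textbf{Case $B$ an odd cycle.} By the same 3-path argument applied to $U_B$, which is separated from the rest by $r$ and $N_{\theta_Q}(U_B)$, some vertex $v_B\in U_B$ is adjacent to a protector. The Corollary gives $w_1\in N_{W\setminus F}(v_B)$, so $w_1\notin F$. For every other $v\in U_B$ we have $N_W(v)=N_W(v_B)=\{w_1\}$. Lemma \ref{lem-oddcycle}(1) shows that every $v\in U_B$ is adjacent to a protector, and Lemma \ref{lem-oddcycle}(2) shows that any two vertices of $U_B$ share a protector neighbour. Now pick two consecutive non-root vertices $v_1,v_2$ of $B$ and mimic the proof of Lemma \ref{lem-oddcycle}(2). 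Choose $3$-sets $A_1\subseteq L(v_1)$ and $A_2\subseteq L(v_2)$ with $A_1\ne A_2$; this is possible since $|L(v_i)|\ge 4$. Set $S_Q(u)=A_1$ on the protectors in $N_{\theta_Q}(v_1)\setminus N_{\theta_Q}(v_2)$ and $S_Q(u)=A_2$ on the protectors in $N_{\theta_Q}(v_2)\setminus N_{\theta_Q}(v_1)$.

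For the common protector neighbours we need a set of size at most $3$ that still works. The idea is to choose one colour $a\in L(v_1)\setminus L(v_2)$, or to use the equality $L(v_1)=L(v_2)$, and proceed as follows.
\begin{itemize}
\item If $L(v_1)\ne L(v_2)$, put $S_Q(u)=\{a\}$ on all protectors in $N_{\theta_Q}(v_1)$. Then either $a$ survives in $L^\phi(v_1)$ and not in $L^\phi(v_2)$, or a common neighbour takes colour $a$. In the second case $|L^\phi(v_2)|>d_Q(v_2)$, because $a\notin L(v_2)$ is a wasted colour (Observation \ref{obs-lphi}).
\item If $L(v_1)=L(v_2)$, the only non-protector is $w_1$, which is common to both. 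Choose a colour $c$ and exploit the fact that $w_1\notin F$ is not confined, together with the shared protector $p$. Setting $S_Q(p)$ to a $3$-subset of $L(v_1)$ should make any admissible $\phi$ either repeat a colour on $N_{\theta_Q}(v_1)$ or use a colour outside $L(v_1)$, which gives (P1).
\end{itemize}

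\textbf{Main obstacle.} The delicate step is the subcase $L(v_1)=L(v_2)$ with overlapping protector neighbourhoods. Here counting $|L(v_i)|=d_G(v_i)$ against $N_{\theta_Q}(v_i)=\{w_1\}\cup P_i$ must be balanced against the budget $|S_Q(u)|\le 3$. I would first try to show that some third vertex $v_3\in U_B$ has a list differing from $L(v_1)$ on a protector-only part. Failing that, I would use Lemma \ref{lem-A} to derive that $N_{\theta_Q}(v_1)=N_{\theta_Q}(v_2)$ and then exhibit $w_1$ as confined, contradicting $w_1\notin F$.
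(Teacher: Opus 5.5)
Your cases $B=K_1,K_2$ are correct. So is the odd-cycle setup: every vertex of $U_B$ is adjacent to $w_1\notin F$ and to a protector, and any two of them share a protector neighbour. Your first bullet ($S_Q\equiv\{a\}$ when $L(v_1)\neq L(v_2)$) also works. The gap is the subcase $L(v_1)=L(v_2)$, which you do not prove, and where your sketched assignment provably fails. Take $N_{\theta_Q}(v_1)=N_{\theta_Q}(v_2)=\{w_1,p\}$, $L(v_1)=L(v_2)=\{1,2,3,4\}$ and $L(p),L(w_1)\supseteq\{1,2,3,4\}$.
\begin{itemize}
\item For any $S_Q(p)$ with $|S_Q(p)|\le 3$, pick $x\in\{1,2,3,4\}\setminus S_Q(p)$ and colour $p$ with $x$.
\item Colour $w_1$ with some $y\in\{1,2,3,4\}\setminus\{x\}$.
\item Then $L^\phi(v_1)=L^\phi(v_2)=\{1,2,3,4\}\setminus\{x,y\}$. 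No colour is repeated or wasted, so neither (P1) nor (P2) of Corollary~\ref{cor-suff} is triggered at $v_1,v_2$, contrary to your second bullet.
\end{itemize}
Hence no list argument localised at $v_1,v_2$ can close this case. Your fallbacks do not help either. $L(v_1)=L(v_2)$ forces $|N_{\theta_Q}(v_1)|=|N_{\theta_Q}(v_2)|$, so Lemma~\ref{lem-A}, which concerns strict inclusion, says nothing. The example also gives locally bad restrictions for several values of $\phi(w_1)$, so there is no reason for $w_1$ to be confined.

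The missing idea is planarity, and you already have the ingredients. Your consecutive vertices $v_1,v_2\in U_B$ have common neighbours $w_1$ and a protector $u$ on the cycle $\theta_Q$. The paths $uv_1w_1$ and $uv_2w_1$, together with the edge $v_1v_2$, cut the disc bounded by $\theta_Q$. The connected graph $Q-\{v_1,v_2\}$ contains a neighbour of $v_1$ and a neighbour of $v_2$, so it lies inside the triangle $[v_1uv_2]$ or the triangle $[v_1w_1v_2]$. Hence each of its vertices has at most one neighbour on $\theta_Q$. There are then two cases:
\begin{itemize}
\item A third vertex of $U_B$ could not be adjacent to both $w_1$ and a protector, contrary to what you proved.
\item If $U_B=\{v_1,v_2\}$, then $B$ is a triangle with a root, and the other leaf block $B'$ has $|N_{\theta_Q}(U_{B'})|\le 1$. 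This contradicts (P2) of Definition~\ref{def-connected}.
\end{itemize}
If $Q=B$, the lemma is immediate from (P1) of Definition~\ref{def-connected}. This topological step is exactly how the paper concludes, right after invoking Lemma~\ref{lem-oddcycle}(2).
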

\begin{proof}
Assume to the contrary that $w_2\notin N_{\theta_Q}(U_B)$.
Since $Q$ is properly connected to $V_2$, we know that $|N_{\theta_Q}(U_B)|\ge 2$. Hence  $N_{\theta_Q}(U_B)$ contains a protector of $Q$. 
By Lemma \ref{lem-adjacentW}, for each $v\in U_B$, $v$ is adjacent to $w_1$ and $d_Q(v)=2$. 
Thus, $B$ is an odd cycle $C=[v_1,v_2,\dots,v_{2l+1}]$ with $l\ge1$.
Without loss of generality, we assume that $v_1,v_2\in U_B$ and $v_1$ is adjacent to some protector of $Q$.
By (2) of \Cref{lem-oddcycle} , $ N_{\theta_Q}(v_1)\cap N_{\theta_Q}(v_2)$ contains a protector $u$  of $Q$.
Thus, $Q-\{v_1,v_2\}$ is contained in the interior of the $4$-cycle $[v_1uv_2w_1]$.
If $Q$ has another leaf block $B'$, then $|N_{\theta_Q}(B')|\le 1$, again  contrary to the assumption that $Q$ is properly connected to $V_2$.
Thus, $Q$ is an odd cycle. 
By Lemma \ref{lem-adjacentW},   $v_{2l+1}$ is adjacent to a non-protector of $Q$. By planarity, $N_{\theta_Q}(v_{2l+1})=\{w_1\}$, in contrary to (1) of \Cref{lem-oddcycle} .
\end{proof}

The following   corollary follows from Lemma \ref{w1,w2} and the planarity of $G$.

\begin{cor}
    \label{2leaf}
  $Q$ has at most two leaf-blocks, and each protector $u$ of $Q$  is adjacent to non-root vertices of exactly one leaf-block of $Q$. 
\end{cor}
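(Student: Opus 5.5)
The plan is to combine Lemma \ref{w1,w2} with a planarity argument inside the disc bounded by the cycle $C_Q=\theta_Q$ (a cycle by (4) of Proposition \ref{prop} and Lemma \ref{lem-finite}).

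\emph{At most two leaf blocks.} Suppose $Q$ has three leaf blocks $B_1,B_2,B_3$. By Lemma \ref{w1,w2}, each $U_{B_j}$ has neighbours $w_1$ and $w_2$. For each $j$, choose a path $P_j$ from $w_1$ to $w_2$ whose interior lies in $U_{B_j}$ together with a path inside $B_j$; this is possible because $G[U_{B_j}]$, or $U_{B_j}$ together with the short path through $B_j$ minus the root, is connected. The leaf blocks are pairwise disjoint apart from roots, and the sets $U_{B_j}$ are pairwise disjoint. So $P_1,P_2,P_3$ are three internally disjoint $w_1$--$w_2$ paths drawn inside the disc of $C_Q$, and they split the disc into regions. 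The middle path, say $P_2$, separates $P_1$ from $P_3$ inside the disc. However, $Q$ is connected, so a path in $Q$ joins $U_{B_1}$ to $U_{B_3}$. This path must pass through the root of $B_2$ or avoid $B_2$ entirely. In either case it can be rerouted to avoid $U_{B_2}$, since it passes only through cut vertices and other blocks. It therefore crosses $P_2$ only at non-$U_{B_2}$ vertices, which is impossible because the interior vertices of $P_2$ lie in $U_{B_2}$ and $w_1,w_2\notin V(Q)$. This gives a contradiction. When $Q$ is $2$-connected, it is itself its unique leaf block, so the bound is trivial.

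\emph{Each protector sees exactly one leaf block.} Let $u$ be a protector of $Q$.
\begin{enumerate}
\item By (P1) of Definition \ref{def-connected}, $u$ has a neighbour in $Q$. I expect this neighbour must be a non-root vertex of a leaf block: by (3) of Proposition \ref{prop} and the structure of Gallai trees, vertices of $Q$ that are neither cut vertices nor in leaf blocks are trapped strictly inside regions bounded by paths through $w_1,w_2$. More precisely, when there are two leaf blocks, the two paths $P_1,P_2$ bound a region of the disc. The cycle $C_Q$ minus $\{w_1,w_2\}$ splits into two arcs, and every vertex of $Q$ outside $U_{B_1}\cup U_{B_2}$ lies strictly between $P_1$ and $P_2$, away from both arcs. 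So such vertices cannot see $u$.
\item For exclusivity, suppose $u$ is adjacent to $x\in U_{B_1}$ and $y\in U_{B_2}$. Then the edges $ux$ and $uy$, together with the paths in $U_{B_1}$ and $U_{B_2}$ to $w_1$ (or $w_2$), would cross the curve formed by $P_1$ or $P_2$.
\end{enumerate}
Concretely, I would arrange the cyclic order on $C_Q$ as $w_1$, arc $A$, $w_2$, arc $A'$, with $P_1$ closer to $A$ and $P_2$ closer to $A'$. Then vertices of $A$ can reach only $U_{B_1}$, and vertices of $A'$ only $U_{B_2}$. If $Q$ has a single leaf block, the claim is immediate from the first part.

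The main obstacle is making the claim in the first item rigorous. It requires describing the position of $Q$ relative to $P_1$ and $P_2$ in the planar embedding, in particular handling root and cut vertices lying on these paths. This is where the proof relies on ``the planarity of $G$'', and I would argue it via the Jordan curve theorem applied to the closed curves $P_j\cup(\text{arc of }C_Q)$.
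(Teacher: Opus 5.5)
Your route is correct but differs from the paper's. The paper uses forbidden minors. Three leaf blocks give a $K_{1,3}$ minor of $Q$, which together with $w_1,w_2$ (Lemma~\ref{w1,w2}) gives a $K_{3,3}$ minor. A protector $u$ adjacent to both $U_B$ and $U_{B'}$ gives a $K_5$ minor: contract $\theta_Q$ to the triangle $w_1w_2u$, contract $U_B$ and $U_{B'}$ to single vertices, and join them through the rest of $Q$. Your Part 1 is this $K_{3,3}$ argument unwound into an explicit separation argument with three chords of the disc, and it is complete. In fact, a path of $Q$ between $U_{B_1}$ and $U_{B_3}$ never meets $U_{B_2}$, since it would have to pass through the root of $B_2$ twice. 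Your Part 2 also covers the ``at least one'' half of ``exactly one'', which the paper's proof of this corollary does not address. The paper obtains it only after Lemma~\ref{lem-2blocks}, from the cycle $C''$ in $G[W\cup U_{B_1}\cup U_{B_k}]$, and that is exactly your item 1. So the minor arguments are shorter for the two negative claims, while your picture gives both directions at once.

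What is missing is the justification you defer. The obstacle you anticipate does not occur: the interiors of $P_1,P_2$ lie in $U_{B_1},U_{B_2}$, so no cut vertex of $Q$ lies on them. The step closes with the same reasoning as your Part 1. Let $D$ be the open disc bounded by $C_Q$.
\begin{itemize}
\item $Q-U_{B_1}$ is connected (by the same root argument), disjoint from $P_1$, and contains the interior vertices of $P_2$. So it lies in the component of $D\setminus P_1$ containing $P_2$.
\item Symmetrically, $Q-U_{B_2}$ lies in the component of $D\setminus P_2$ containing $P_1$.
\item Hence $Q-U_{B_1}-U_{B_2}$ lies in the region bounded by $P_1\cup P_2$. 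The boundary of that region meets $C_Q$ only at $w_1,w_2$, so no protector sees it.
\item A neighbour in $Q$ of a vertex on the arc $A$ lies in the closure of the region bounded by $A\cup P_1$. That region contains no vertex of $Q-U_{B_1}$, so the neighbour is in $U_{B_1}$.
\end{itemize}
Together with (P1) of Definition~\ref{def-connected}, this gives ``exactly one''.
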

\begin{proof}
    If $Q$ has three leaf-blocks, then $Q$ contains $K_{1,3}$ as a minor and by Lemma \ref{w1,w2}, $Q \cup W$ contains $K_{3,3}$ as a minor, a contradiction.

    Assume $Q$ has two leaf-blocks $B$ and $B'$, and  $u$ is a protector of $Q$ adjacent to both $U_B$ and $U_{B'}$. 
    By contracting  the cycle $\theta_Q$ into a triangle containing $w_1,w_2,u$, and contracting each of $U_B$ and $U_{B'}$ into a single vertex, we obtain  a copy of $K_5$,   a contradiction.   
\end{proof}

\begin{lem}\label{lem-2blocks}
    $Q$ has exactly two leaf blocks.
\end{lem}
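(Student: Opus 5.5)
By Corollary \ref{2leaf}, $Q$ has at most two leaf blocks. If $Q$ is not 2-connected it has at least two leaf blocks, so the claim would follow. It therefore suffices to rule out the case that $Q$ is 2-connected, i.e.\ $Q$ is a single block. In that case all vertices of $Q$ are non-root vertices of the unique leaf block $B=Q$, and by (2) of Proposition \ref{prop}, $Q$ is $K_1$, $K_2$, $K_3$ or an odd cycle. The plan is to derive a contradiction in each case by building a valid assignment, or by contradicting Lemmas \ref{lem-adjacentW}, \ref{lem-A}, \ref{lem-oddcycle} and \ref{w1,w2}.

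\textbf{Small cases.} Since $\theta_Q$ is a cycle with at least three vertices, at least one protector $u$ exists. By (P1) of Definition \ref{def-connected}, $u$ is adjacent to some $v\in V(Q)=U_B$.
\begin{itemize}
\item If $Q=K_1$, then $d_Q(v)=0$, contradicting Lemma \ref{lem-adjacentW}.
\item If $Q=K_2=\{v,v'\}$, the Corollary after Lemma \ref{lem-adjacentW} gives $F=\emptyset$ and $N_W(v)=\{w_1,w_2\}$. By planarity, $v'$ lies in a face bounded by $v,w_i$ and a path of $\theta_Q$. Every protector is adjacent to $Q$, so Lemma \ref{lem-A} applied to $N_{\theta_Q}(v')$ versus $N_{\theta_Q}(v)$ (one of the two neighbourhoods must be nested in the other, up to non-protectors) forces the difference to be a single $w_i\in F$. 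This contradicts $F=\emptyset$.
\end{itemize}

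\textbf{Odd cycle case (main obstacle).} Here $Q=[v_1,\dots,v_{2l+1}]$ with $l\ge 1$ (this includes $K_3$). Every vertex has $d_Q=2$ and a neighbour in $W$ by Lemma \ref{lem-adjacentW}. By planarity, the neighbourhoods $N_{\theta_Q}(v_j)$ occupy consecutive arcs of the cycle $\theta_Q$ in cyclic order, overlapping only at endpoints. Since $\theta_Q$ contains only two non-protectors $w_1,w_2$, going around the odd cycle the vertices split into a run attached to $w_1$ and a run attached to $w_2$, with protectors in between. Let $v_B$ be a vertex adjacent to a protector. I would apply Lemma \ref{lem-oddcycle}: part (2) requires two consecutive vertices with the same $W$-neighbour $w_i$ to share a protector. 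Planarity then allows at most two vertices per $w_i$-run to see protectors, while (1) forces every vertex whose $W$-neighbourhood is contained in that of $v_B$ to see a protector.

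Combining these should leave only the configuration where each $v_j$ has neighbourhood $\{w_i, u\}$ or similar. For that configuration, part (3) of Lemma \ref{lem-oddcycle} gives $|L(v)\cap L(v')|\ge 3$ for vertices attached to different $w_i$. I would then construct a direct valid assignment: choose $S_Q(u)$ to be a $3$-set so that, whatever $\phi(w_1),\phi(w_2)$ are, two consecutive vertices end up with $L^\phi$ lists differing, or some vertex has a surplus colour. This gives (P2) or (P1) of Corollary \ref{cor-suff} and contradicts the choice of $Q$. The delicate part is this final bookkeeping of colours at the two transition points between the $w_1$-run and the $w_2$-run. The fact that the cycle is odd, so the runs cannot be perfectly symmetric, is what I expect to make the assignment work.
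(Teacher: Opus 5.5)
Your reduction to the case that $Q$ is a single block, and your treatment of $Q=K_1$, are fine. However, both remaining cases have genuine gaps.

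\emph{The $K_2$ case.} Your claim that one of $N_{\theta_Q}(v)$, $N_{\theta_Q}(v')$ is contained in the other is false, and Lemma~\ref{lem-A} has no ``up to non-protectors'' version. What Lemma~\ref{lem-A} does give is an intermediate step. If $v'$ saw no protector, then $N_{\theta_Q}(v')\subseteq W\subsetneq N_{\theta_Q}(v)$ and the difference would contain a protector, which is impossible. So $v'$ sees a protector. By Lemma~\ref{lem-adjacentW}, $v'$ is then also adjacent to both $w_1,w_2$, and by planarity $N_{\theta_Q}(v)\cap N_{\theta_Q}(v')=W$. This configuration is compatible with every lemma proved so far and has no nesting at all: take, for instance, $\theta_Q=w_1p_1w_2p_2$, $N_{\theta_Q}(v)=\{w_1,p_1,w_2\}$, $N_{\theta_Q}(v')=\{w_1,p_2,w_2\}$. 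So no contradiction can come from those lemmas, and you need an explicit valid assignment, as the paper builds. Put a $3$-set $\{1,2,3\}\subseteq L(v)$ on the protectors of $v$ and one colour $b\in L(v')\setminus\{1,2,3\}$ on the protectors of $v'$. Then either $|L^\phi(v)|\ge 2$, or $\phi(w_1),\phi(w_2)$ are two distinct colours of $\{1,2,3\}$, in which case $b\in L^\phi(v')\setminus L^\phi(v)$.

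\emph{The odd cycle case.} Here you give only a plan, and the plan as stated does not work. A $3$-set $S_Q(u)\subseteq L(v_i)\cap L(v_{i+1})$ at a transition between the $w_1$-run and the $w_2$-run is defeated by colourings with $\phi(w_1)=\phi(w_2)\in S_Q(u)$. In that case $L^\phi(v_i)$ and $L^\phi(v_{i+1})$ can coincide with no surplus anywhere. The missing idea is to split on whether some vertex of $Q$ is adjacent to both $w_1$ and $w_2$; by planarity and Lemma~\ref{lem-oddcycle}(1) there is at most one such vertex.

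If such a vertex $v_1$ exists, it has a surplus colour whenever $\phi(w_1)=\phi(w_2)$. First rule out, via Lemma~\ref{lem-oddcycle}(1),(2), the degenerate subcase where all other vertices attach to the same $w_i$. Then Lemma~\ref{lem-oddcycle}(1) and planarity put a protector at $v_i$ or $v_{i+1}$, the other transition, and Lemma~\ref{lem-oddcycle}(3) gives $|L(v_i)\cap L(v_{i+1})|\ge 3$. With these, your $3$-set assignment works.

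If no such vertex exists, no assignment is constructed at all. By Lemma~\ref{lem-oddcycle}(1),(2) and planarity, a run of length at least $2$ containing a vertex that sees a protector would leave the other non-protector with no neighbour in $Q$, contradicting Lemma~\ref{w1,w2}. Hence some $w_i$ has a unique neighbour $v_1$ in $Q$, and all protectors see only $v_1$. Then every path from $V(\theta_Q)$ to $Q-v_1$ starts at the other non-protector or passes through $v_1$, contradicting (P2) of Definition~\ref{def-connected}. The parity of the cycle plays no role in either subcase.
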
 
\begin{proof}
Suppose not, then by Corollary \ref{2leaf}, $Q$ is a complete graph $K_n$ ($n\le3$) or an odd cycle. It follows \Cref{lem-adjacentW} that $Q$ is not a copy of $K_{1}$.

If $Q$ is $K_{2}$ with vertices $v_1,v_2$, then by symmetry, we assume $v_1$ is adjacent to some protector of $Q$.
By Lemma \ref{lem-adjacentW}, we conclude that $v_1$ is adjacent to $w_1$ and $w_2$. 
By (1) of \Cref{lem-oddcycle} ,  $v_2$ is  adjacent to some protector of $Q$. 
Again by \Cref{lem-adjacentW}, we conclude that $v_2$ is also adjacent to $w_1$ and $w_2$. Thus, $N_{\theta_Q}(v_1)\cap N_{\theta_Q}(v_2)=W$ (for otherwise, $G$ contains $K_5$ as a minor). Now we know that $|L(v_1)|, |L(v_2)|\ge 4$.

Let $\{1,2,3\}\subseteq L(v_1)$ and $b\in  L(v_2)- \{1,2,3\}$.
Define
\[
S_Q(u)=
\begin{cases}
\{1,2,3\}, & \text{if } u\in N_{\theta_Q}(v_1)- W,\\[4pt]
\{b\}, & \text{if } u\in N_{\theta_Q}(v_2)- W,\\[4pt]
\emptyset, & \text{otherwise.}
\end{cases}
\]

Since $ N_{\theta_Q}(v_1)\cap N_{\theta_Q}(v_2)=W$, $S_Q$ is well-defined.
For  any proper $L$-colouring $\phi$ of $\theta_Q$ such that  $\phi(u) \notin S_Q(u)$ for each vertex $u\in V(\theta_Q)$, we have $\{1,2,3\}- \{\phi (w_1),\phi (w_2)\}\subseteq L^\phi (v_1)$. 
If $|L^\phi(v_1)| \ge 2 > d_Q(v_1)=1$, then by Corollary \ref{cor-suff}, there is an $L^\phi$-colouring of $Q$. Otherwise, $\{\phi(w_1), \phi(w_2) \} \subseteq \{1,2\}$, and $b \in L^\phi(v_2) - L^\phi(v_1)$. Again by Corollary \ref{cor-suff}, there is an $L^\phi$-colouring of $Q$. So $S_Q$ is a valid assignment for $\theta_Q$,  a contradiction.

Now assume that $Q$ is an odd cycle $C=[v_1v_2\ldots v_{2l+1}]$ with $l\ge1$. 
For each vertex $w \in \{ w_1, w_2\}$, $N_{Q}(w)$ is a subpath of $C$.
By Lemma \ref{lem-adjacentW}, each  $v_i$ is adjacent to at least one of $w_1,w_2$ for each $i\in \{1,2,\dots,2l+1\}$.

If there exist two vertices, say $v_1, v_j$, in $V(Q)$ that are adjacent to both $w_1$ and $w_2$, then  $Q - \{v_1, v_j\}$ is contained in the $4$-cycle $[v_1w_1v_jw_2]$. 
Hence, $N_{\theta_Q}(v_i)\in \big \{\{w_1\},\{w_2\}\big\}$ for $i\ne 1,j$. 
Then $v_1$ or $v_j$ is adjacent to some protector of $Q$, and this is in contrary to (1) of \Cref{lem-oddcycle} .
So at most one vertex of $C$ is adjacent to both $w_1,w_2$.

\medskip
{\bf Case 1} There is a vertex, say $v_1$,  in $V(Q)$ that is adjacent to both $w_1$ and $w_2$. 

\medskip

Since $N_Q(x)$ is a subpath of $C$ for $x\in W$, we may assume that $v_2,\dots v_i~(i\ge 2)$ is adjacent to $w_1$ and every other vertex in $V(Q)$ is adjacent to $w_2$.

If $i=2l+1$, then by symmetry we assume  $Q-\{v_1,v_{2l+1}\}$ is contained in a $3$-cycle $[v_1v_{2l+1}w_1]$.
It implies that $v_j$ is not adjacent to any protector of $Q$ for $j\notin\{1,2l+1\} $.
If $v_{2l+1}$ is adjacent to some protector of $Q$, then by \Cref{lem-oddcycle},  $N_{\theta_Q}(v_2)\cap N_{\theta_Q}(v_{2l+1})$ contains at least one protector of $Q$, a contradiction. 
Therefore, each protector of $Q$ is adjacent to $v_1$, in contrary to (1) of \Cref{lem-oddcycle} .
Therefore, $i<2l+1$.
By the planarity of $G$, $v$ is not adjacent to any protector of $Q$ for $v\in V(Q)-\{v_1,v_i,v_{i+1}\}$.

If $v_i$ and $v_{i+1}$ are not adjacent to any protector of $Q$, then 
$v_1$ is adjacent to some protector of $Q$. But then  by (1) of \Cref{lem-oddcycle} , $v_i$ and $v_{i+1}$ are adjacent to some protector of $Q$, a contradiction.
By symmetry, we assume that $v_i$ is adjacent to some protector of $Q$.
By \Cref{lem-oddcycle} (3), $|L(v_i)\cap L(v_{i+1})|\ge 3$.

Let  $S_Q(u)=\{1,2,3\}\subseteq L(v_i)\cap L(v_{i+1})$ for each protector $u\in N_{\theta_Q}(v_i)\cup N_{\theta_Q}(v_{i+1})$ of $Q$ and let $S_Q(u)=\emptyset$ for every other vertex  $u\in V(\theta_Q)$.
For  any proper $L$-colouring $\phi$ of $\theta_Q$ such that  $\phi(u) \notin S_Q(u)$ for each vertex $u\in V(\theta_Q)$,  we have $\{1,2,3\}- \{\phi (w_1)\} \subseteq L^\phi (v_i)$  and $\{1,2,3\}- \{\phi (w_2)\}\subseteq L^\phi (v_{i+1})$.
Thus, at least one of the following holds:
\begin{itemize}
    \item $\phi (w_1)=\phi (w_2)$ and hence $L^\phi(v_1)>d_Q(v_1)$.
    \item $\phi(w_1)\notin \{1,2,3\}$ and hence $|L^{\phi}(v_i)|>d_{Q}(v_i)$ or $\phi(w_2)\notin \{1,2,3\}$ and hence $|L^{\phi}(v_{i+1})|>d_{Q}(v_{i+1})$.
    \item $\phi (w_1)\ne \phi (w_2)$ and $\phi(w_1),\phi (w_2)\in \{1,2,3\}$. Thus, $ L^{\phi}(v_i)\ne  L^{\phi}(v_{i+1})$.
\end{itemize} 
By Corollary \ref{cor-suff}, there is an $L^\phi$-colouring of $Q$. So $S_Q$ is a valid assignment for $\theta_Q$, a contradiction.

\medskip
{\bf Case 2} Each vertex in $V(Q)$ is adjacent to exactly one of $w_1,w_2$.
\medskip

By the pigeonhole principle, it follows that  $|N_Q(w_1)|\ge2$ or $|N_Q(w_2)|\ge2$.
If $|N_Q(w_1)|\ge2$ and there is a vertex  $ v_i\in N_Q(w_1)$ that is adjacent to some protector of $Q$, then by (1) of \Cref{lem-oddcycle} , there is a protector $u\in N_{\theta_Q}(v_i)\cap N_{\theta_Q}(v_j)$ of $Q$ for $v_i, v_j\in N_Q(w_1)$.
Thus, $Q-\{v_i,v_j\}$ is contained in a $4$-cycle $[w_1v_iuv_j]$. Hence, $N_Q(w_2)=\emptyset$, in contrary to Lemma \ref{w1,w2}. 

Therefore, either $|N_Q(w_1)|=1$, or every vertex in $N_Q(w_1)$ is not adjacent to any protector of $Q$. Similarly, the same holds for $w_2$. Hence, we may assume that $N_Q(w_1)=\{v_1\}$ and every protector of $Q$ is adjacent to $v_1$.
This contradicts to the assumption that $Q$ is properly connected to $V_2$.
\end{proof}

Assume the blocks of $Q$  are ordered as $B_1,B_2, \ldots, B_k$ ($k \ge 2$) such that $B_1$ and $B_k$ are leaf-blocks and for $i=1,2, \ldots, k-1$, $B_i$ and $B_{i+1}$ share a cut-vertex $z_i$, and $z_1, z_2, \ldots, z_{k-1}$ are pairwise distinct.  

By Lemma~\ref{w1,w2}, we have $w_1, w_2 \in U_{B_i}$ for $i = 1, k$. 
Thus the subgraph of $G$ induced by $W \cup U_{B_1} \cup U_{B_k}$ contains a cycle $C''$ such that 
$Q - U_{B_1}  - U_{B_k}$ is contained in the interior of $C''$, and each  protector of $Q$ is contained in the exterior of $C''$. Thus, no protector of $Q$ is adjacent to any vertex in $Q - U_{B_1}  - U_{B_k}$.
Moreover, by Corollary \ref{2leaf}, we conclude that  each protector of $Q$ is adjacent to non-root vertices of exactly one leaf-block of $Q$.
Since each vertex in $\theta_Q$ is adjacent to some  vertex in $Q$, 
 the protectors of $Q$ can be divided into two disjoint parts, $C^1$ and $C^k$, where 
$N_{\theta_Q}(U_{B_1})=C^1\cup W$ and $N_{\theta_Q}(U_{B_k})=C^k\cup W$,
and each $G[C^i]$ ($i=1,k$) is a  path on $\theta_Q$ or an empty graph.

\begin{lem}\label{lem-|N(wi)|=1}
    Each of $w_1$ and $w_2$ has exactly one neighbor in $U_{B_i}$ for $i=1, k$. 
\end{lem}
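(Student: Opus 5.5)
By Lemma~\ref{w1,w2}, each of $w_1,w_2$ has at least one neighbour in $U_{B_i}$ for $i=1,k$. It therefore suffices to show that neither $w_1$ nor $w_2$ has two neighbours in $U_{B_1}$; the case $i=k$ is symmetric. Assume to the contrary that $w_1$ is adjacent to two distinct vertices $x,y\in U_{B_1}$. Then $B_1$ is not $K_2$, since $U_{B_1}$ would be a single vertex. So $B_1$ is an odd cycle or a triangle, and $d_Q(x)=d_Q(y)=2$.

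The argument is planar. The cycle formed by $w_1$, $x$, $y$ and the part of $B_1$ between $x$ and $y$ bounds a disc (possibly together with the path of $B_1$ through the root). I will choose $x,y$ as the extreme neighbours of $w_1$ along $B_1$, using that $N_Q(w_1)\cap U_{B_1}$ forms a subpath of $B_1$. Since $w_2\in N_{\theta_Q}(U_{B_1})$ by Lemma~\ref{w1,w2}, and $w_2$ must also reach $U_{B_k}$, it lies on the side of this configuration that contains the root $z_1$ and the rest of $Q$. The protectors in $C^1$ are then confined to one face region, which is bounded by $w_1$, a vertex in $U_{B_1}$ and $\theta_Q$. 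Consequently some vertex of $U_{B_1}$ strictly between $x$ and $y$ (when one exists), or one of $x,y$, is adjacent to no protector of $Q$. If $x,y$ are consecutive, a vertex on the far side is the candidate instead.

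I then derive a contradiction from Lemma~\ref{lem-oddcycle}, using the vertex $v_B\in U_{B_1}$ that is adjacent to a protector. Such a vertex exists: if $C^1=\emptyset$, then all protectors lie in $C^k$, and the same argument is carried out in $B_k$; and if $|\theta_Q|\ge 3$, some leaf block meets a protector. There are two cases.

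\textbf{Case (a).} Some $v\in U_{B_1}$ has $N_W(v)\subseteq N_W(v_B)$ but no protector neighbour. This contradicts (1) of Lemma~\ref{lem-oddcycle}.

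\textbf{Case (b).} $N_W(v)=N_W(v_B)=\{w_1\}$ for $v$ on the opposite side of the $w_1$-fan from the protectors. By (2) of Lemma~\ref{lem-oddcycle}, $N_{\theta_Q}(v)\cap N_{\theta_Q}(v_B)$ contains a protector $u$. The $4$-cycle $[w_1 v u v_B]$ then separates $w_2$, or the root $z_1$ and hence $B_k$, from the remaining vertices of $U_{B_1}$. This contradicts Lemma~\ref{w1,w2}, or contradicts $Q$ being properly connected through (P2).

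The main obstacle is the planar case analysis. It must pin down precisely where $C^1$ sits relative to the neighbours of $w_1$ and $w_2$, and handle separately the cases where $v_B$ is adjacent to both $w_1$ and $w_2$, or only to $w_2$. I would first fix the cyclic order of $\theta_Q$ as $w_1, C^1, w_2, C^k$, which is forced by Corollary~\ref{2leaf} and the cycle $C''$. I would then read off the order of neighbours along $B_1$: the vertices adjacent to $w_1$, then those adjacent to protectors, then those adjacent to $w_2$. From this order, two $w_1$-neighbours in $U_{B_1}$ yield a vertex that contradicts (1) or (2) of Lemma~\ref{lem-oddcycle}. If needed, the valid-assignment construction of Lemma~\ref{lem-A} can be applied directly to two vertices with nested neighbourhoods $N_{\theta_Q}(v)=\{w_1\}\subsetneq N_{\theta_Q}(v')$.
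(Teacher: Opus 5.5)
\textbf{Gap: the case $C^1=\emptyset$.} Your argument needs a vertex $v_B\in U_{B_1}$ with a protector neighbour. Lemma~\ref{lem-oddcycle} only compares two non-root vertices of the \emph{same} leaf block, and one of them must see a protector. If $C^1=\emptyset$, then $U_{B_1}$ has no such vertex. Your fallback, ``the same argument is carried out in $B_k$'', is not available: the assumption you are refuting says that $w_1$ has two neighbours in $U_{B_1}$, and it says nothing about $U_{B_k}$.

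Moreover, $C^1=\emptyset$ is not a corner case. It is what the main configuration produces. Take consecutive $w_1$-neighbours $x_1,y_1\in U_{B_1}$ with $N_{\theta_Q}(y_1)=\{w_1\}\subsetneq N_{\theta_Q}(x_1)$. Lemma~\ref{lem-A} gives $N_{\theta_Q}(x_1)=\{w_1,w_2\}$ \emph{and} $w_2\in F$. The path $w_1x_1w_2$ separates $C^1$ from $Q-x_1$, so every vertex of $C^1$ would have to be adjacent to $x_1$. Hence $C^1=\emptyset$, and nothing more can be extracted inside $B_1$.

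The missing idea is to use the confined vertex $w_2\in F$ to transfer the situation to $B_k$. This is what the paper does:
\begin{enumerate}
\item Since $C^1=\emptyset$ and $|V(\theta_Q)|\ge 3$, we have $C^k\neq\emptyset$.
\item Lemma~\ref{lem-adjacentW} forces every protector-adjacent vertex of $U_{B_k}$ to have $d_Q=2$ and to be adjacent to $w_1$, the only possible member of $W\setminus F$.
\item The paper concludes that $B_k$ is an odd cycle in which $w_1$ again has two neighbours. The same reasoning then gives $C^k=\emptyset$, so $|V(\theta_Q)|=|W|=2$, a contradiction.
\end{enumerate}
Your proposal never uses $F$ and invokes Lemma~\ref{lem-A} only ``if needed''. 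Yet its second conclusion, $w_i\in F$, is the step that makes the proof work.

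\textbf{Secondary gap: Cases (a) and (b) are not exhaustive.} This holds even when $C^1\neq\emptyset$. Take $B_1=z_1v_1\cdots v_m$ with $m\ge 4$, $N_{\theta_Q}(v_1)=\{w_2\}\cup C^1$, and $N_{\theta_Q}(v_i)=\{w_1\}$ for $2\le i\le m$. This configuration is planar, and none of Lemmas~\ref{lem-adjacentW}, \ref{lem-A}, \ref{lem-oddcycle} rules it out. Here $v_B=v_1$ is the only protector-adjacent vertex, and $N_W(v_1)\cap N_W(v_i)=\emptyset$ for $i\ge 2$. So parts (1) and (2) of Lemma~\ref{lem-oddcycle} are vacuous, and part (3) only yields $L(v_i)\subseteq L(v_1)$. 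Forbidding the colours $L(v_2)$ on $C^1$ merely forces $\phi(w_1)=\phi(w_2)$ in a bad colouring. A contradiction must therefore again come from $B_k$, for example from a vertex of $U_{B_k}$ adjacent to both $w_1$ and $w_2$.

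The paper's written argument also passes over this configuration. Its step to $N_{\theta_Q}(y_1)\subsetneq N_{\theta_Q}(x_1)$ tacitly assumes that $x_1$ has a second $\theta_Q$-neighbour. Your planar description (``$C^1$ confined to a region bounded by $w_1$, a vertex of $U_{B_1}$ and $\theta_Q$'') is too loose to detect such cases, and it does not by itself produce the vertex your Cases (a) and (b) require.
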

\begin{proof}
    Assume to the contrary that $w_1$ has at least two neighbours in $U_{B_1}$. Note that $B_1$ is an odd cycle in this case.  Let $x_1, y_1$ be the such two neighbours with $x_1y_1\in E(Q)$. Such vertices $x_1, y_1$ exist as $d_{G}(v)\ge 3$ for each vertex $v\in V(Q)$. Since $w_1$ is adjacent to some vertex in $U_{B_k}$, by the planarity, one of $x_1$ and $y_1$ has no other neighbour in $\theta_Q$, say $y_1$. Thus we have $N_{\theta_Q}(y_1) \subsetneq N_{\theta_Q}(x_1)$. It follows \Cref{lem-A} that $N_{\theta_Q}(x_1) - N_{\theta_Q}(y_1) = \{w_2\}$  and $w_2\in F$. 
    If $C^1\ne \emptyset$, then by the planarity, each vertex in $C^1$ is adjacent to $x_1$. It implies that  $x_1$ is adjacent to some protector of $Q$.  By (1) of \Cref{lem-oddcycle} , $y_1$ is also adjacent to some protector of $Q$, a contradiction. So $C^1= \emptyset$.

    By \Cref{lem-adjacentW},  each vertex in $U_{B_k}$ must be adjacent to the vertex in $\{w_1\}=W\backslash F$, and $B_k$ is an odd cycle as $F\neq \emptyset$. Now the vertex $w_1$ has at least two neighbours in $U_{B_k}$. We can find a pair $x_{k}, y_{k}$ in $U_{B_k}$, which plays the same role as the pair  $x_1, y_1$ in $U_{B_1}$. Using the same argument, we can get $C^k= \emptyset$. However, $|V(\theta_Q)|=|V(C^1)|+|V(C^k)|+|W|=2$, a contradiction. 
\end{proof}

\begin{lem}\label{lem-3}
Both $B_1$ and $B_k$ are complete graphs. Furthermore, if $B_i$ is a copy of $K_3$, then $C^i\ne \emptyset$.

\end{lem}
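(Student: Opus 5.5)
We first show that $B_1$ and $B_k$ cannot be odd cycles of length at least $5$. Then, assuming $B_i\cong K_3$ and $C^i=\emptyset$, we build a valid assignment for $\theta_Q$, which contradicts the choice of $Q$. By symmetry we treat $i=1$ throughout.

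\emph{Step 1: $B_1$ is complete.} By Lemma~\ref{lem-adjacentW}, every $v\in U_{B_1}$ is adjacent to a non-protector, that is, to $w_1$ or $w_2$. By Lemma~\ref{lem-|N(wi)|=1}, each of $w_1,w_2$ has exactly one neighbour in $U_{B_1}$. Hence $|U_{B_1}|\le 2$. An odd cycle of length $2l+1\ge 5$ that is a leaf block has $2l\ge 4$ non-root vertices, so this is impossible. By Proposition~\ref{prop}(2), $B_1$ is therefore $K_2$ or $K_3$. (We have $B_1\neq K_1$ since $k\ge 2$.)

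\emph{Step 2: setting up the case $B_1\cong K_3$ with $C^1=\emptyset$.} Write $U_{B_1}=\{x,y\}$, so $B_1=[x\,y\,z_1]$. Since $C^1=\emptyset$, we have $N_{\theta_Q}(U_{B_1})=W$. By Step 1's counting, each of $x,y$ has exactly one $W$-neighbour, say $N_{\theta_Q}(x)=\{w_1\}$ and $N_{\theta_Q}(y)=\{w_2\}$. Hence $|L(x)|=|L(y)|=3$.

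\emph{Step 2, subcase $L(x)\ne L(y)$.} Pick $c\in L(x)-L(y)$. If $\phi(w_1)\ne c$, then $c\in L^\phi(x)-L^\phi(y)$, and Corollary~\ref{cor-suff}(P2) gives an $L^\phi$-colouring of $Q$. So $w_1$ is confined to $c$, and symmetrically $w_2$ is confined to some colour; thus $W\subseteq F$. Since $|V(\theta_Q)|\ge 3$ and $C^1=\emptyset$, the set $C^k$ is nonempty. So some vertex of $U_{B_k}$ is adjacent to a protector, and by Lemma~\ref{lem-adjacentW} it needs a neighbour in $W\setminus F=\emptyset$, a contradiction.

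\emph{Step 2, subcase $L(x)=L(y)=:A$.} For any colouring $\phi$ of $\theta_Q$, either some $\phi(w_j)\notin A$, giving (P1) at $x$ or $y$, or $\phi(w_1)\ne\phi(w_2)$, giving (P2). Hence $L^\phi$ can be bad only if $\phi(w_1)=\phi(w_2)=c\in A$. Let $t_1,t_2$ be the unique neighbours of $w_1,w_2$ in $U_{B_k}$.
\begin{itemize}
\item If $t_1=t_2$, then in this situation $|L^\phi(t_1)|>d_Q(t_1)$, so $S_Q\equiv\emptyset$ is already valid, a contradiction.
\item So $t_1\ne t_2$, and $B_k\cong K_3$ with $U_{B_k}=\{t_1,t_2\}$. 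If $C^k=\emptyset$ then $|V(\theta_Q)|=2$, which is impossible. Hence $C^k\neq\emptyset$, and by Corollary~\ref{2leaf} every vertex of $C^k$ is adjacent to $t_1$ or $t_2$.
\end{itemize}

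\emph{Main obstacle: the final assignment.} We define $S_Q$ on the protectors in $C^k$ so that whenever $\phi(w_1)=\phi(w_2)=c$, either $L^\phi(t_1)\ne L^\phi(t_2)$ or one of them has surplus. Following Lemma~\ref{lem-oddcycle}(3) and the $K_2$ case of Lemma~\ref{lem-2blocks}, we split into two cases.
\begin{itemize}
\item If $L(t_1)\neq L(t_2)$, say $t_1$ has a protector neighbour, take a $2$-subset $\{a,b\}\subseteq L(t_1)-L(t_2)$ (or a single colour, according to degrees). Put $S_Q(u)=\{a,b\}$ on the protectors adjacent to $t_1$. Then $\{a,b\}-\{c\}$ survives in $L^\phi(t_1)$ but not in $L^\phi(t_2)$.
\item If $L(t_1)$ and $L(t_2)$ share many colours, put $S_Q(u)$ equal to a $3$-subset of $L(t_1)\cap L(t_2)$ on all protectors in $C^k$. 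Then the only common neighbour colour removed from both lists is $c$, forcing either surplus or unequal lists.
\end{itemize}
The delicate part is checking that $|S_Q(u)|\le 3$ throughout and that common protector neighbours of $t_1,t_2$ are handled. Each such check yields (P1) or (P2) via Corollary~\ref{cor-suff}, so $S_Q$ is valid, contradicting the choice of $Q$.
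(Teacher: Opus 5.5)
Your Step~1 matches the paper. Your subcase $L(x)\neq L(y)$ is also correct: $w_1$ and $w_2$ are confined, so $F=W$, and a vertex of $U_{B_k}$ adjacent to a protector then contradicts Lemma~\ref{lem-adjacentW}. This is actually more careful than the paper, which gets $L(x)=L(y)$ by citing Lemma~\ref{lem-oddcycle}(3), even though that lemma's hypothesis (that $x$ or $y$ sees a protector) fails when $C^1=\emptyset$.

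The gap is in the subcase $L(x)=L(y)=A$. You correctly reduce to the case $\phi(w_1)=\phi(w_2)=c\in A$, but the assignment on $C^k$ you propose does not defeat it. Take your second bullet with $L(t_1)=L(t_2)=D$, and let $T\subseteq D$ be the $3$-set placed on the protectors. Suppose $c\in T$, and the protector neighbours of each $t_j$ receive exactly the $|D|-3$ colours of $D-T$; nothing in $S_Q$ forbids this. Then $L^\phi(t_1)=L^\phi(t_2)=T-\{c\}$ has size $2=d_Q(t_j)$, so neither (P1) nor (P2) of Corollary~\ref{cor-suff} is forced. The $3$-subset trick in Case~1 of Lemma~\ref{lem-2blocks} works only because there the two non-protectors receive \emph{different} colours. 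Here you are exactly in the case where the same colour $c$ is deleted from both lists. (Your first bullet also needs the remark that if $c$ equals the chosen $b\in L(t_i)-L(t_{3-i})$, then $t_{3-i}$ gains a surplus; that is minor.)

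The missing idea is that the case $\phi(w_1)=\phi(w_2)$ never arises, because $C^1=\emptyset$ forces $w_1w_2$ to be an edge of $\theta_Q$.
\begin{itemize}
\item The path $w_1xyw_2$ splits the disc bounded by $\theta_Q$ into two parts.
\item $Q-\{x,y\}$ is connected, contains $z_1$, and misses this path, so it lies in one part.
\item Any internal vertex of the arc of $\theta_Q$ from $w_1$ to $w_2$ bounding the other part has a neighbour in $Q$ by (P1) of Definition~\ref{def-connected}. That neighbour can only be $x$ or $y$.
\item Since $N_{\theta_Q}(x)=\{w_1\}$ and $N_{\theta_Q}(y)=\{w_2\}$, no such internal vertex exists, so the arc is the single edge $w_1w_2$.
\end{itemize}
Hence every proper $L$-colouring of $\theta_Q$ has $\phi(w_1)\neq\phi(w_2)$. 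Combined with $L(x)=L(y)$, your own dichotomy shows $L^\phi$ is never bad, so $S_Q\equiv\emptyset$ is a valid assignment, a contradiction. This is precisely the paper's argument, and it makes your whole analysis of $t_1$, $t_2$ and $B_k$ unnecessary.
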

\begin{proof}
By Lemma \ref{lem-adjacentW}, for $i=1,k$, each vertex in $U_{B_i}$ is adjacent to some vertex in $W$.
Thus it follows from \Cref{lem-|N(wi)|=1} that $|U_{B_i}|\leq 2$, and hence $B_i$ is a complete graph. 

Assume that $B_1=[z_1x_1y_1]$ and $C^1=\emptyset$.  Then $w_1w_2 \in E(G)$. We may assume that $ x_1w_1, y_1w_2\in E(G)$. By \Cref{lem-oddcycle} (3), $|L(x_1) \cap L(y_1)| \ge 3$, and hence  $L(x_1)=L(y_1)$.  For every proper $L$-coloring $\phi$ of $\theta_Q$, $\phi(w_1)\neq \phi(w_2)$ as $w_1w_2\in E(G)$. So $L(x_1)\backslash\phi(w_1)=L(y_1)\backslash\phi(w_2)$. Thus $L^{\phi}(x_1)\neq L^{\phi}(y_1)$, which contradicts  (P2) of \Cref{cor-suff}.
\end{proof}

By symmetry, we assume that $C^k\not=\emptyset$. The leaf block $B_1$ is a copy of $K_2$ or $K_3$. In the following proof, if $B_1$ is a copy of $K_2$, then let $U_{B_1}=\{x_1\}$; if $B_1$ is a copy of $K_3$, we may assume that $U_{B_1}=\{x_1,y_1\}$ and $x_1$ is adjacent to $w_1$ and some protector of $Q$, and $y_1$ is adjacent to $w_2$ (\Cref{lem-3}).

If $B_1=[x_1y_1z_1]$ is a triangle with $\{x_1, y_1\} = U_{B_1}$, then by \Cref{lem-oddcycle} (3),  $|L(x_1)\cap L(y_1)|\ge 3$. 
 If $B_1=x_1z_1$ is a copy of $K_2$ with $\{x_1\} = U_{B_1}$, then  $x_1$ is adjacent to both $w_1$ and $w_2$. In any case, $\cap{ v \in U_{B_1}} L(v)$ contains a  set  of three colours, say $\{a_1,a_2,a_3\} \subseteq \cap{ v \in U_{B_1}} L(v)$. 

Let $S_Q(u)=\{a_1,a_2,a_3\}$ for each vertex $u\in C^1$ (note that $C^1$ could be empty). As $C^k\ne\emptyset$, let $x_k$ be the vertex in $U_{B_k}$ such that $x_k$ is adjacent to some protector of $Q$.
If $B_k$ is a copy of $K_2$, then $x_k$ is also adjacent to $w_1$ and $w_2$. If $B_k$ is a copy of $K_3$, then $x_k$ is adjacent $w_1$ or $w_2$. Thus we have $|L(x_k)|\ge 4$, and hence there is a colour $b\in L(x_k)-\{a_1,a_2,a_3\}$.

Let $L'(v)=\{a_1,a_2,a_3\}$ for each vertex $v\in U_{B_1}$ and let $L'(v)=L(v)$ for each vertex $v\in V(Q)-U_{B_1}$.
Let $l$ be the largest index such that $B_l$ has a vertex $x$ such that $b\notin  L'(x)$. 
Since  $b\notin L'(v)$ for each vertex $v\in U_{B_1}$, the index $l$ is well-defined. 
Let $\{c_1, c_2, c_3\}$ be a set of three colours contained in $L(u_k)-\{b\}$. For each protector $u\in N_{\theta_Q}(x_k)$, let 
\[
S_{Q}(u) = \begin{cases} \{b\}, &\text{ if $k-l$ is even}, \cr 
\{c_1,c_2,c_3\}, &\text{ if $k-l$ is odd}.
\end{cases}
\]
Let $S_Q(u)=\emptyset$ for every other vertex $u\in V(\theta_Q)-N_{\theta_Q}(u_k)-C^1$.

By our assumption, $S_Q$ is not a valid assginment for $\theta_Q$. Thus there exists  a proper $L$-colouring $\phi$ of $\theta_Q$ such that $\phi(u) \notin S_Q(u)$ for each vertex $u\in V(\theta_Q)$, and 
$L^\phi$ is a bad list assignment of $Q$. 
By Lemma \ref{lem-Gallai}, there is a colour set $C_i$ for each block $B_i$ with $1 \le i \le k$, such that for each vertex $x$ of $Q$, $L^{\phi}(x) = \cup_{x \in B_i}C_i$ and $C_i \cap C_{i+1} = \emptyset$. 

Assume $x_1 \in U_{B_1}$. 
As $S_Q(u)=\{a_1,a_2,a_3\}\subseteq \cap_{v\in U_{B_1}}L(v)$ for each vertex $u\in C^1$, we have $\{a_1, a_2, a_3\}\backslash\{\phi(w_1), \phi(w_2)\} \subseteq L^{\phi}(x_1)= C_1$.

If $B_1=x_1z_1$, then  $|C_1|=|L^\phi(x_1)| = d_{Q}(x_1)=1$, and hence $C_1\subseteq \{a_1,a_2,a_3\}$.

If $B_1=[x_1y_1z_1]$ and $N_W(x_1)=\{w_1\}$, then $\{a_1, a_2, a_3\}\backslash\{\phi(w_1)\} \subseteq L^{\phi}(x_1)= C_1$. As $|C_1|=|L^\phi(x_1)| = d_{Q}(x_1)=2$, we have $C_1\subseteq \{a_1,a_2,a_3\}$.

By the definition of the index $l$ and the colour $b$, we know that  $b\in L'(x)$ for every vertex $x\in V(B_{j})$ for each $j\in\{l+1,...,k\}$, where $l\leq k-1$. Since each protector of $Q$ is not adjacent to any vertex in $Q - U_{B_1}  - U_{B_k}$, for each vertex $x$ of $Q - U_{B_1}  - U_{B_k}$, we have $$L(x)  - \{\phi(w_1), \phi(w_2)\} \subseteq L^{\phi}(x).$$
In particular, $b \in L^{\phi}(x)$ if and only if $b\in L(x)=L'(x)$.

The colour $b$ is not contained in $C_l$ as $b\notin L'(x)$ for some vertex $x\in B_{l}$. Since $b \in L^{\phi}(x)$ for all $x \in B_{l+1}$, we know that $b \in C_{l+1}$.
 As $C_{l+1} \cap C_{l+2} = \emptyset$, we know that $b \notin C_{l+2}$ (note that it is possible when $B_{l+2}$ is a copy of $K_2$). Now $b \in L^{\phi}(x)$ for all $x \in B_{l+3}$. This implies that $b \in C_{l+3}$.  Repeat this argument, we conclude that   $b \in C_{l+1}, C_{l+3}, \ldots, C_{l+1+2t}, \ldots$.  

If $k-l$ is odd, then this implies that $b \in C_k$, and hence $\phi(u)\ne b$ for each protector $u\in N_{\theta_Q}(u_k)$ of $Q$. Therefore,  $\phi(u) \notin \{c_1,c_2,c_3,b\}$. Thus $|L^{\phi}(u_k)|\ge \{c_1,c_2,c_3,b\}\backslash \{\phi(w_i): w_i\in N_{G}(u_k)\} > d_{Q}(u_k)$, which implies $L^{\phi}$ is not a bad list assignment for $Q$, a contradiction.

If $k-l$ is even, then $b \notin C_l, C_{l+2}, \ldots, C_k$. But $\phi(u) \notin S_Q(u) = \{b\}$ for each protector $u\in N_{\theta_Q}(u_k)$ of $Q$ and $\phi(w_i) \ne b$ for $i=1,2$. Hence $b\in C_{k}$, a contradiction. \qed

\section{Concluding remark}

We have shown that every 3-connected non-complete planar graph is degree-truncated $11$-choosable, and we conjecture that this upper bound can be further reduced by $1$. 

The  proof of Theorem \ref{thm-main2} consists of two main steps. Let $V_1=\{v: d_G(v) \le 10\}$ and $V_2=V(G)-V_1$. 

The first step is that there is a way to assign protectors from $V_2$ to connected components $Q$ of $G[V_1]$ such that each vertex $v \in V_2$ is the protector of at most two components of $G[V_1]$, and each connected component $Q$ has at most two non-protectors among its neighbours in $V_2$. (This step was proved in \cite{JXXZ}). 

The second step is to find a valid assignment $S_Q$ for each connected component $Q$ of $G[V_1]$ so that $|S_Q(v)| \le 3$ for each protector $v$ of $Q$ (and $S_Q(v)=\emptyset$ if $v$ is not a protector of $Q$). The set $S_Q(v)$ is the "cost" for $v$ to protect $Q$. If there is an $L$-colouring of $G[V_2]$ such that $v$ does not use colours from $S_Q(v)$ for the components $Q$ that are protected by $v$, then this colouring can be extended to an $L$-colouring of $G$. This is so because each connected component $Q$ of $G[V_1]$ is "protected". 

In our proof, because $|L(v)| =11$, after deleting colours from $S_Q(v)$, each vertex $v$ still has at least 5 permissible colours. Hence there is an $L$-colouring of $G[V_2]$ such that $v$ does not use colours from $S_Q(v)$ for the components $Q$ that are protected by $v$. If $|L(v)|=10$ for each vertex $v \in V_2$, and $G[V_2]$ is 4-choosable, then such an $L$-colouring of $G[V_2]$ also exists. Thus if $G[V_2]$ is 4-choosable, then $G$ is degree-truncated $10$-choosable.

If $G[V_2]$ is not 4-choosable, then the required $L$-colouring of $G[V_2]$ may not exist. One possible approach is to use a different assignment of protectors for components of $G[V_1]$.  Also in some cases, the choices of $S_Q(v)$ have some flexibility, which we may explore. Another possible approach is instead of protecting components of $G[V_1]$ one by one, one may try to protect some components of $G[V_1]$ together with joint efforts from their neighbours. To prove Conjecture \ref{conj-1}, it seems that one needs to prove that every planar graph is in some sense "almost" 4-choosable. Conversely, a counterexample to Conjecture \ref{conj-1} would has a subgraph which is planar and in some sense  "strongly" non-4-choosable. This intuition suggests that to prove or disprove Conjecture \ref{conj-1} will not be easy.

\bibliographystyle{plain}
\bibliography{template}	
	 		
\end{document}